\def\k{\kappa}
\def\l{\lambda}
\def\to{\longrightarrow}
\def\mto{\longmapsto}
\def\a{\alpha}
\def\g{\gamma}
\def\p{\phi}
\def\G{\Gamma}
\def\s{\psi}
\def\eps{\epsilon}
\def\r{\rho}
\def\o{\circ}
\def\t{\tau}
\def\vp{\varphi}
\def \dar{\times}
\def \vlim{\varprojlim}
\def\N{\mathbb{N}}
\def\F{\mathbb{F}}
\def\R{\mathbb{R}}
\def\E{\mathbb{E}}
\def\I{\mathbb{I}}
\def\pa{\partial}
\newtheorem{The}{Theorem}[section]
\newtheorem{Pro}[The]{Proposition}
\newtheorem{Lem}[The]{Lemma}
\newtheorem{Cor}[The]{Corollary}
\theoremstyle{definition}
\newtheorem{Def}[The]{Definition}
\newtheorem{Rem}[The]{Remark}
\newtheorem{Examp}[The]{Example}
\thanks{2000 Mathematical Subject Classification.
Primary 58A05  
; 58B20 
; Secondary  70G45 
}
\begin{document}
\title{Complete lift of vector fields and sprays  to $T^\infty M$}
\author{A. Suri }
\address{Department of Mathematics \\
Faculty of Sciences, Bu-Ali Sina University, Hamedan 65178, Iran}
\email{a.suri@basu.ac.ir \& a.suri@math.iut.ac.ir \& ali.suri@gmail.com}
\author{S. Rastegarzadeh}
\address{Department of Mathematics \\
Faculty of Sciences, Bu-Ali Sina University, Hamedan 65178, Iran}
\email{rastegarzade.math@yahoo.com}
\date{2015}
\maketitle

\begin{abstract}
In this paper for a given Banach, possibly infinite dimensional, manifold $M$ we focus on the geometry of its
iterated tangent bundle $T^rM$, $r\in {\N}\cup\{\infty\}$. First we endow $T^rM$ with a canonical atlas using that of $M$.
Then the concepts of vertical and complete lifts for functions and vector fields on $T^rM$ are defined which they will
play a pivotal role in our next studies i.e. complete lift of (semi)sprays.
Afterward we supply $T^\infty M$ with a generalized Fr\'{e}chet manifold structure and we will show that any vector field or (semi)spray  on $M$, can be lifted to a vector field or (semi)spray  on $T^\infty M$.  Then, despite of the natural difficulties with non-Banach modeled manifolds, we will discuss about the ordinary differential equations on $T^\infty M$ including  integral curves, flows  and geodesics.
Finally,  as an example, we apply our results to the infinite dimensional case of  manifold of closed curves.\\
\textbf{Keywords:} Vertical and complete lift, semispray,  spray, geodesic,, Fr\'{e}chet manifolds, Banach manifold, Manifold of closed curves.
\end{abstract}
\pagestyle{headings} \markright{Complete lift of vector fields and sprays  to $T^\infty M$}

\tableofcontents
\section*{Introduction}

Lift of the geometric objects to tangent bundles
had witnessed a wide interest due to the works of Miron \cite{Miron}, Bucataru and Dahl \cite{Buc-Dahl 1}, Morimoto \cite{Morimoto}, Yano, Kobayashi and Ishihara \cite{Yano, Yano-Kobayashi} and Suri \cite{Suri Osck, iso Osck}.

On the other hand, one of the most important generalizations of ordinary differential equations
to manifolds is the class of   second order differential equations (SODEs) or (semi)sprays. A semispray on $M$ is a vector filed $S$ on $TM$ such that, if $\pi:TM\to M$ denotes the canonical projection of $TM$ on $M$, then  $\pi_*\o S=id_M$ and the homogeneity requirement of order 2, leads us to the concept of sprays \cite{Buc-Dahl 1, Lang}. In fact semisprays and sprays  provide a geometric structure to study the curves on the manifold $M$ which solve special systems of SODEs with remarkable geometric meanings
\cite{Buc-Dahl 1, Del-Par, Lang}.  Moreover, the setting of semisprays provides a unified framework for studying geodesics in Riemann, Finsler and Lagrange geometries (See \cite{Lang}, \cite{Buc-Dahl} and the references therein.)

However, most of  these materials are proved for the finite dimensional case and there are a few researcher which they follow the formalism of Abraham etc. \cite{Abraham}, Lang \cite{Lang},  Hamilton  \cite{Hamilton} or Kriegel and Michor \cite{KriMic}. In this paper, for a Banach modeled manifold $M$,  first we endow the iterated tangent bundle $T^rM$, $r\in\N$, with a canonical atlas using that of $M$.
Then following the direction of \cite{Buc-Dahl 1}, the concepts of vertical and complete lifts for functions and vector fields on $T^rM$ are defined which they will
play a pivotal role in our next studies i.e. complete lifts of semisprays and sprays.

Afterward, using the algebraic tool "\textbf{projective limit}", which is compatible with our geometric setting, we will try to lift vector fields, semisprays and sprays to $T^\infty M$. More precisely we show that the iterated lifts of vector field (or semispray) form a projective system and their limits satisfy the appropriate conditions for vector fields (or semisprays) on the   Fr\'{e}chet manifold $T^\infty M$. Despite of a lack of general solvability and uniqueness theorem for ordinary differential equations for non-Banach modeled Fr\'{e}chet manifolds \cite{Hamilton}, we will prove an existence theorem for the flow of lifted vector fields on $T^\infty M$. As a consequence we show that for the given initial values, there exists a unique geodesic with respect to the lifted (semi)sprays  on $T^\infty M$ with non-trivial lifetime.
%
%
%
%
%
%
%
%
%

However, our attempt to define the  lift of functions (and forms) from the base manifold $M$ to $T^\infty M$ fails. In fact the sequence of iterated complete (and vertical) lifts of functions, does not satisfy the appropriate conditions for  projective limits of map (see remark \ref{Rem complete and vertical lifts of functions is not a projective system}).

Apart from the manifold structure of $T^\infty M$, we can define complete and vertical lifts of functions, vector fields, semisprays and sprays form $T^\infty M$ to $TT^\infty M$ similar to that in section \ref{Section Complete and vertical lifts}.  One of the  main goals of this article is to build a class of  concrete example in Fr\'{e}chet geometry for which we can find geodesics (and flows) with nontrivial lifetime (see also remark  \ref{Rem meaning of geodesics in frechet geometry}).

%
%
%
%
%
%
Finally, as an advantage of working with infinite dimensional Banach manifolds, we will apply our results to the Banach manifolds of closed curves.

Through this paper all the maps and manifolds are assumed to be smooth and the base manifold $M$ is  partitionable.

\section{Preliminaries}\label{sec1}

Let $M$ be a smooth manifold modeled on the Banach space $\E$ and
$\pi_0:TM\longrightarrow M$ be its tangent bundle. We remind that
$TM=\bigcup_{x\in M}T_xM$ such that $T_xM$ consists of all equivalent
classes of the form ${[c,x]}$ where
\begin{equation*}
c\in C_x=\{c:(\epsilon,\epsilon)\longrightarrow M;~\epsilon>0,~c~
\textrm{is smooth and}~c(0)=x\},
\end{equation*}
under the equivalence relation
\begin{equation*}
c_1\sim_x c_2\Longleftrightarrow c_1^\prime(0)=c_2^\prime(0)
\end{equation*}
for $c_1,~c_2\in C_x$. The projection map $\pi_0:TM\to M$, maps $[c,x]$ onto $x$. If
$\mathcal{A}_0=\{(\p_{\alpha_0}:=\p_\a,U_{\alpha_0}:=U_\a);~\alpha\in I\}$ is an
atlas for $M$, then we have the  canonical  atlas
$\mathcal{A}_1=\{\big(\p_{\alpha_1}:=D\p_{\a_0},U_{\a_1}:=\pi_0^{-1}(U_\alpha)\big);~\alpha\in
I\}$ for $TM$ where
\begin{eqnarray*}
\p_{\a_1}:\pi_0^{-1}(U_{\a_0})\to &\to &
U_{\a_0}\times\E\\
{[ {c},x]}&\mto& ((\p_{\a_0}\o c)(0),(\p_{\a_0}\o c)'(0)).
\end{eqnarray*}
Inductively one can define an atlas for $T^rM:=T(T^{r-1}M)$, $r\in\N$,
by
\begin{equation*}
\mathcal{A}_r:=\{\big(\p_{\a_r}:=D\p_{\a_{r-1}},U_{\a_r}:=\pi_{r-1}^{-1}(U_{\alpha_{r-1}})\big);~\alpha\in
I\}
\end{equation*}
for which $\pi_{r-1}:T^rM\to T^{r-1}M$ is the natural projection.
The model spaces for $TM$ and $T^rM$ are $\E_1:=\E^2$ and
$\E_r:=\E^{2^r}$ respectively. We add here to the convention that $T^0M=M$.
%
%

%
%
\section{Complete and vertical lifts }\label{Section Complete and vertical lifts}
The vertical and complete lift of geometric objects has been studied by many authors (see e.g \cite{Yano}, \cite{Yano-Kobayashi}, \cite{Buc-Dahl 1}, \cite{Buc-Dahl} and the references therein).
For a comprehensive treatment in the finite dimensional case  and for references to the extensive literature on
the subject one may refer to the paper \cite{Buc-Dahl 1} by Bucataru and Dahl.

In this section we introduce the concepts of vertical and complete lifts of functions and vector fields
from $T^rM$, $r\in \N\cup\{0\}$, to $T^{r+1}M$ for the Banach manifold $M$.

Set  $\k_1:=id_{TM}$ and for $r\geq 2$ consider the canonical involution $\k_r:T^rM\to
T^rM$ which satisfies
$\partial_t\partial_s\g(t,s)=\k_r\partial_s\partial_t\g(t,s)$, for
any smooth map $\g:(-\eps,\eps)^2\to T^{r-2}M$. If we consider
$T^{r-2}M$ as a smooth manifold modeled on $\E_{r-2}$, then the
charts of $T^{r-1}M$, $T^{r}M$ and $T^{r+1}M$ take their values
in $\E_{r-1}=\E_{r-2}^2$, $\E_{r}=\E_{r-2}^4$ and $\E_{r+1}=\E_{r-2}^8$ respectively. It is easy to check that the local representation of $\k_r$ is given by
\begin{eqnarray*}
{\k_r}_\a:=\p_{\a_r}\o\k_r\o{\p^{-1}_{\a_r}}:U_{\a_{r-2}}\times\E_{r-2}^3
&\to &
U_{\a_{r-2}}\times\E_{r-2}^3\\
(x,y,X,Y)&\mto& (x,X,y,Y).
\end{eqnarray*}
Setting $D^2=DD$, the relations
\begin{eqnarray}
\k_r^2 &=& Id_{T^rM}\\
\pi_r\o D\k_r&=&\k_r\o\pi_r\\
D\pi_{r-1}&=&\pi_{r}\o\k_{r+1}\\
D^2\pi_{r-1}\o\k_{r+2}&=&\k_{r+1}\o D^2\pi_{r-1}\\
D\pi_{r-1}\o \pi_{r+1}&=&   \pi_r\o D^2\pi_{r-1},
\end{eqnarray}
$r\in\N$, as they are listed in \cite{Buc-Dahl, Buc-Dahl 1},   are valid in our framework too.\\

\subsection{Vertical and complete lifts of functions and vector fields}
There are two known lifts for functions and vector fields due to \cite{Yano} and \cite{Buc-Dahl 1}. Most of the materials of this section are modified versions of those form \cite{Buc-Dahl 1}.
\begin{Def}
Suppose that $ r\geq 0 $ and $f\in{C^{\infty}(T^{r}M)}$.
Then the \textbf{vertical lift} of $f$ is the smooth function $f^{v}\in{C^{\infty}(T^{r+1}M)}$ defined by
\begin{equation}\label{f^v}
f^{v}(\xi)=(f\circ\pi_{r}\circ\k_{r+1})(\xi),\qquad{\xi}\in{T^{r+1}M}.
\end{equation}
\end{Def}
If $r=0$, then equation (\ref{f^v}) implies that $ f^{v}=f\o\pi_0$ and for $r\geq1$, the equations (2) and (4) imply that $f^{v}=f\circ{D}\pi_{r-1}$. Moreover, the local representation of $f^v$ is
\begin{eqnarray}
\label{local f^v}
f_{\a}^{v}:=f^{v}\o\p_{\a_{r+1}}^{-1}:U_{\a_{r-1}}\times\E_{r-1}^{3}
&\to &
\R
\\
\nonumber (x,y,X,Y)&\mto &{f_{\a}(x,X)}
\end{eqnarray}
where $f_\a:=f\o\p_{\a_r}^{-1}:U_{\a_{r-1}}\times \E_{r-1}\to \R$.
\begin{Def}\label{Def complete lift of function}
For $r\geq0$ and  $f\in{C^{\infty}(T^{r}M)}$ the \textbf{complete lift} of $f$ is the smooth
function $f^c\in{C^{\infty}(T^{r+1}M)}$ defined by
\begin{equation}\label{f^c}
f^{c}(\xi)=df\circ{\k_{r+1}}(\xi),\qquad{\xi}\in{T^{r+1}M}.
\end{equation}
\end{Def}
Clearly for $r=0$, $f^c(\xi)=df(\xi)$. If $r\geq 1$, then the local representation of
$f^c$  is
\begin{eqnarray*}
f_{\a}^{c}:=f^{c}\o\p^{-1}_{\a_{r+1}}:U_{\a_{r-1}}\times\E_{r-1}^3&\to &\R\\
(x,y,X,Y)&\mto &\partial_{1}f(x,X)y+\partial_{2}f(x,X)Y
\end{eqnarray*}
where $\partial_{i}, i=1,2$, stands for the partial derivative with respect to the i'th variable.

Following the formalism of \cite{Buc-Dahl 1} we define  the concepts of vertical and complete
lifts for vector field in the Banach case.
\begin{Def}
Let $r\geq0$ and  $\mathbf{A}:T^{r}M\to{T^{r+1}M}$ be a vector field.
The \textbf{vertical lift} of $\mathbf{A}$ is the vector field $\mathbf{A}^{v}:T^{r+1}M\to{T^{r+2}M}$ defined by the following relation
\begin{equation}\label {A^v}
\mathbf{A}^{v}(\xi)=D\kappa_{r+1}\circ\partial_{s}\big(\kappa_{r+1}(\xi)+s\mathbf{A}\circ\pi_{r}\circ\kappa_{r+1}(\xi)\big)\vert_{s=0},\qquad{\xi}\in{T^{r+1}M}.
\end{equation}
\end{Def}
 $\mathbf{A}^v$ is a vector field since
\begin{eqnarray*}
\pi_{r+1}\circ{\mathbf{A}^{v}(\xi)}&=&\pi_{r+1}\circ{D}\kappa_{r+1}\circ\partial_{s}\big(\kappa_{r+1}(\xi)+s\mathbf{A}\circ\pi_{r}\circ\kappa_{r+1}(\xi)\big)\vert_{s=0}\\
&=&\kappa_{r+1}\circ\pi_{r+1}\circ\partial_{s}\big(\kappa_{r+1}(\xi)+s\mathbf{A}\circ\pi_{r}\circ\kappa_{r+1}(\xi)\big)\vert_{s=0}\\
&=&\kappa_{r+1}\circ\kappa_{r+1}(\xi)\\
&=&\xi
\end{eqnarray*}
Locally on the chart $(U_{\a_r},\p_{\a_r})$, suppose that the local representation of $\mathbf{A}$ be given by
\begin{eqnarray*}
\mathbf{A}_{\a}:=\p_{\a_{r+1}}\o{\mathbf{A}}\o\p_{\a_r}^{-1}:U_{\a_{r-1}}\times\E_{r-1}&\to
&U_{\a_{r-1}}\times\E_{r-1}^3\\
(x,y) &\mto &\big(x,y;A_\a(x,y),B_\a(x,y)\big)
\end{eqnarray*}
where $A_\a$ and $B_\a$ are smooth  $\E_{r-1}$ valued functions on $U_{\a_r}\subseteq T^rM$.
Then the vertical lift of $\mathbf{A}$ is the  vector field
${\mathbf{A}}^v\in \mathfrak{X}{(T^{r+1}M)}$ with the local representation
$\mathbf{A}_{\a}^{v}:=\p_{\a_{r+2}}\o{\mathbf{A}^v}\o\p_{\a_{r+1}}^{-1}$ where
\begin{eqnarray*}
{\mathbf{A}_{\a}^v}:U_{\a_{r-1}}\times\E^{3}_{r-1}&\to
&U_{\a_{r-1}}\times\E^{7}_{r-1}\\
(x,y,X,Y) &\mto &\big(x,y,X,Y;0,A_\a(x,X),0,B_\a(x,X)\big).
\end{eqnarray*}
Note that
$A_{\a}(x,X)=A_{\a}^{v}(x,y,X,Y)$ and $B_{\a}(x,X)=B_{\a}^v(x,y,X,Y)$.

Finally we introduce the concept of complete lift of a vector field which will play a key role in our next studies.
\begin{Def}
Let $\mathbf{A}:T^{r}M\to{T^{r+1}M}$, $r\geq0$, be a vector field. The \textbf{complete lift} of $\mathbf{A}$ is the vector field $\mathbf{A}^{c}:T^{r+1}M\to{T^{r+2}M}$ defined by
\begin{equation}\label{A^c}
 \mathbf{A}^{c}=D\kappa_{r+1}\circ\kappa_{r+2}\circ{D\mathbf{A}}\circ\kappa_{r+1}
\end{equation}
\end{Def}
Note that
\begin{eqnarray*}
\pi_{r+1}\circ{\mathbf{A}^c}(\xi)&=&\pi_{r+1}\circ{D}\kappa_{r+1}\circ\kappa_{r+2}\circ{D\mathbf{A}}\circ\kappa_{r+1}(\xi)\\
&=&\kappa_{r+1}\circ\pi_{r+1}\circ\kappa_{r+2}\circ{D\mathbf{A}}\circ\kappa_{r+1}(\xi)\\
&=&\kappa_{r+1}\circ{D}\pi_{r}\circ{D\mathbf{A}}\circ\kappa_{r+1}(\xi)\\
&=&\kappa_{r+1}\circ{D}(\pi_{r}\circ{\mathbf{A}})\circ\kappa_{r+1}(\xi)\\
&=&\kappa_{r+1}\circ\kappa_{r+1}(\xi)\\
&=&\xi
\end{eqnarray*}
which means that ${\mathbf{A}^c}$ is a  vector field (see also \cite{Buc-Dahl 1}).
Moreover with the above notations the local representation of  $\mathbf{A}^c$ is
\begin{eqnarray*}
\mathbf{A}_{\a}^{c}:U_{\a_{r-1}}\times\E^{3}_{r-1}&\to&{U}_{\a_{r-1}}\times\E^{7}_{r-1}\\
\xi=(x,y,X,Y) &\mto &\big(x,y,X,Y;A^{v}_\a(\xi),A^{c}_\a(\xi),B^{v}_\a(\xi),B^{c}_\a(\xi)\big)
\end{eqnarray*}
More precisely
\begin{eqnarray*}
  \mathbf{A}_{\a}^{c}(\xi) &=& \big(x,y,X,Y; A_\a(x,X), \pa_1 A_\a(x,X)y + \pa_2 A_\a(x,X)Y \\
  && , B_\a(x,X) , \pa_1 B_\a(x,X)y + \pa_2 B_\a(x,X)Y \big).
\end{eqnarray*}

\subsection{Semisprays and their lifts}\label{def spray}

For $r\geq 1$ a semispray on $T^{r-1}M$ is a vector field $S:T^rM\to
T^{r+1}M$ with the additional property  $\k_{r+1}\o S=S$ (or equivalently $D\pi_{r-1}\o
S=id_{T^rM}$) (\cite{Buc-Dahl 1}). Considering the atlas $\mathcal{A}_{r-1}$, we observe that
locally on the chart $\big(\p_{\a_{r-1}},U_{\a_{r-1}}\big)$ the local
representation of the vector field $S$ is $S_\a:=\p_{\a_{r+1}}\o
S\o {\p_\a}_r^{-1}$ and
\begin{eqnarray*}
S_\a:U_{\a_{r-1}}\times\E_{r-1}&\to
&U_{\a_{r-1}}\times\E_{r-1}^{3}\\
(x,y) &\mto &\big(x,y;A_\a(x,y),B_\a(x,y)\big)
\end{eqnarray*}
where $A_\a , B_\a: U_{\a_{r-1}}\times\E_{r-1}\to \E_{r-1}$ are
smooth  functions locally representing $S$. The condition
$\k_{r+1}\o S=S$ (or $D\pi_r\o S=id_{T^rM}$) yields that $A_\a(x,y)=y$. By
convention, we set $B_\a(x,y):=-2G_\a(x,y)$. Hence the local representation of
$S$ is
\begin{eqnarray*}
S_\a:U_{\a_{r-1}}\times\E_{r-1}\to U_{\a_{r-1}}\times\E_{r-1}^{3}~;~~
(x,y) \mto \big(x,y;y,-2G_\a(x,y)\big).
\end{eqnarray*}
Considering the semispray $S$ as a vector field and using the notion of complete lift for vector field, we define the complete lift   $S^c={D}\k_{r+1}\o\k_{r+2}\o{DS}\o\k_{r+1}$. Bucataru and Dahl in \cite{Buc-Dahl 1} proved that $S^c$ is a semispray. Here we state a modified version of their proof for infinite dimensional manifolds.
%
%
\begin{Pro}\label{S^c}
 $S^c$ is a semispray on $T^rM$.
\end{Pro}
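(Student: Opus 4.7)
The plan is: the vector-field property of $S^c\colon T^{r+1}M \to T^{r+2}M$ has already been verified in the text immediately preceding the proposition, so what remains is the semispray condition for $S^c$ regarded as a vector field on $T^{r+1}M$. Concretely, I need to show $D\pi_r \circ S^c = id_{T^{r+1}M}$ (equivalently $\kappa_{r+2} \circ S^c = S^c$). My approach is a short global chain-rule computation using only the algebraic identities (1)--(5) already listed in Section~\ref{Section Complete and vertical lifts} together with the semispray hypothesis $D\pi_{r-1} \circ S = id_{T^r M}$.

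Starting from
\[
D\pi_r \circ S^c \;=\; D\pi_r \circ D\kappa_{r+1} \circ \kappa_{r+2} \circ DS \circ \kappa_{r+1},
\]
I would first contract the initial pair by the chain rule and relation (3): $D\pi_r \circ D\kappa_{r+1} = D(\pi_r \circ \kappa_{r+1}) = D(D\pi_{r-1}) = D^2\pi_{r-1}$. Next, relation (4) commutes the involution across the second differential, giving $D^2\pi_{r-1} \circ \kappa_{r+2} = \kappa_{r+1} \circ D^2\pi_{r-1}$. A second application of the chain rule yields $D^2\pi_{r-1} \circ DS = D(D\pi_{r-1} \circ S) = D(id_{T^r M}) = id_{T^{r+1}M}$, using the semispray property of $S$. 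What remains is $\kappa_{r+1}\circ \kappa_{r+1}$, which equals $id_{T^{r+1}M}$ by~(1). Combining these reductions yields $D\pi_r \circ S^c = id_{T^{r+1}M}$, i.e.\ $S^c$ is a semispray on $T^r M$.

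I do not expect any real obstacle; the argument is essentially bookkeeping on iterated tangent bundles, the only care being to verify that each domain and codomain matches up and that the index on each involution $\kappa_{\bullet}$ is the one appropriate to the level at which it acts. If the global identities turn out to be awkward to cite, there is an immediate fallback via the local representation
\[
S^c_\a(x,y,X,Y) = \bigl(x,y,X,Y;\,X,\,Y,\,-2G_\a(x,X),\,-2(\partial_1 G_\a(x,X)y + \partial_2 G_\a(x,X)Y)\bigr),
\]
obtained by substituting $A_\a(x,y)=y$ and $B_\a(x,y)=-2G_\a(x,y)$ into the local formula for $\mathbf{A}^c$; since $\kappa_{r+2}$ locally swaps the two middle $\E_{r-1}^2$-pairs and both of those pairs coincide with $(X,Y)$, the identity $\kappa_{r+2} \circ S^c = S^c$ is obvious.
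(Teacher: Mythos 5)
Your proposal is correct, but it proves the proposition by a genuinely different route than the paper. The paper verifies the condition $\kappa_{r+2}\circ S^c=S^c$ by a direct computation in a chart: it writes $\xi=(x,y,X,Y)$, pushes it through $D\kappa_{r+1}\circ\kappa_{r+2}\circ DS\circ\kappa_{r+1}$ step by step, and observes that the resulting local expression $\big(x,y,X,Y;X,Y,-2G_\a^v(\xi),-2G_\a^c(\xi)\big)$ is fixed by $\kappa_{r+2}$ — exactly the observation you relegate to your ``fallback''. Your primary argument is instead global and coordinate-free: from $D\pi_r\circ D\kappa_{r+1}=D(\pi_r\circ\kappa_{r+1})=D^2\pi_{r-1}$ (chain rule and relation (3)), then $D^2\pi_{r-1}\circ\kappa_{r+2}=\kappa_{r+1}\circ D^2\pi_{r-1}$ (relation (4)), then $D^2\pi_{r-1}\circ DS=D(D\pi_{r-1}\circ S)=id_{T^{r+1}M}$ (semispray hypothesis), and finally $\kappa_{r+1}\circ\kappa_{r+1}=id$ (relation (1)), giving $D\pi_r\circ S^c=id_{T^{r+1}M}$; all indices and domains check out, and the equivalence of the $D\pi_r$-form with the $\kappa_{r+2}$-form is exactly the one asserted in the paper's definition of semispray. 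What your route buys is economy and structural transparency — it uses only the listed identities, so it works verbatim in the Banach setting with no chart bookkeeping; what the paper's computation buys is the explicit local representation of $S^c$ (with components $-2G_\a^v$, $-2G_\a^c$), which the paper reuses later, e.g.\ in the homogeneity argument of Proposition \ref{proposition compltete lift of sprays}. Either argument is complete on its own.
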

\begin{proof}
It suffices to show that $\k_{r+2}\o{S^c}=S^c$. For  $\xi=(x,y,X,Y)\in T^{r+1}M$ we have
\begin{eqnarray*}
\k_{r+2}\o{S^c}(\xi)&=&\k_{r+2}\o{D}\k_{r+1}\o\k_{r+2}\o{DS}\o\k_{r+1}(\xi)\\
&=&\k_{r+2}\o{D}\k_{r+1}\o\k_{r+2}\o{DS}(x,X,y,Y)\\
&=&\k_{r+2}\o{D}\k_{r+1}\o\k_{r+2}\big(S(x,X),dS(x,X)(y,Y)\big)\\
&=&\k_{r+2}\o{D}\k_{r+1}\o\k_{r+2}\big(x,X,X,-2G_{\a}(x,X);y,Y,Y\\
& &,-2dG_{\a}(x,X)(y,Y)\big)\\
&=&\k_{r+2}\o{D}\k_{r+1}\big(x,X,y,Y;X,-2G_{\a}(x,X),Y,-2dG_{\a}(x,X)(y,Y)\big)\\
&=&\k_{r+2}\big(\k_{r+1}(x,X,y,Y),D\k_{r+1}(x,X,y,Y)(X,-2G_{\a}(x,X),Y\\
& &,-2dG_{\a}(x,X)(y,Y)\big)\\
&=&\k_{r+2}\big(x,y,X,Y;X,Y,-2G_{\a}(x,X),-2dG_{\a}(x,X)(y,Y)\big)\\
&=&\big(x,y,X,Y;X,Y,-2G_{\a}(x,X),-2dG_{\a}(x,X)(y,Y)\big)\\
&=&\big(x,y,X,Y;X,Y,-2G_{\a}^{v}(\xi),-2G_{\a}^{c}(\xi)\big)\\
&=&S^{c}(\xi)
\end{eqnarray*}
which completes the proof.
\end{proof}
%
%
\begin{Def}
A geodesic for the semispray $S$ is a smooth curve $\g:(-\eps,\eps)\to
T^{r-1}M$ such that its canonical lift $\g'$ is an integral curve
for $S$ i.e. $\g''(t)=S(\g'(t))$, $t\in(-\eps,\eps)$.
\end{Def}
Locally on a chart this last means that
\begin{eqnarray*}
\Big(\g_\a(t),\g_\a'(t),\g_\a'(t),\g_\a''(t)\Big)&=&S_\a\Big((\g_\a(t),\g_\a'(t)\Big)\\
&=&\Big(\g_\a(t),\g_\a'(t),\g_\a'(t),-2G_\a(\g_\a(t),\g_\a'(t))\Big)
\end{eqnarray*}
where $\g_\a=\p_{\a_{r-1}}\o\g$.
Therefore the curve $\g$ is a geodesic for the semispray $S$ if and only if it satisfies the following second order ordinary differential equation (SODE)
\begin{equation}\label{e.geod}
\g_\a''(t)+2G_\a(\g_\a(t),\g_\a'(t))=0~~;\a\in I
\end{equation}
which are known as  geodesic equations with respect to $S$.
\begin{Rem} For a Riemannian manifold with its canonical metric spray (\cite{Lang}), equations (\ref{e.geod}) coincide with the usual geodesic equations.
\end{Rem}
%
%

%
%
%
%
\section{Complete lift of vector fields and sprays to $T^\infty M$}
The geometry of $T^rM$, $r\in\N$, and lifting of geometric objects to this bundle was studied by many authors (See \cite{Buc-Dahl 1, Yano} and the references therein). However, no extension in the direction of $T^\infty M$ has appeared in the literatures.  The aim of this section is to lift a vector field $X\in\mathfrak{X}(TM)$,  to a vector field $X^{c_\infty}\in\mathfrak{X}(TT^\infty M)$.

To assess the benefits of this lift, we will consider the interesting cases of  semisprays and sprays. In fact we will try to lift a (semi)sprays  form $M$ to $T^\infty M$.

In order to  introduce   $T^\infty M$ we will consider it as an appropriate limit (projective or inverse limit) of the finite factors $T^iM$. To make our exposition as  self-contained as possible, we state some preliminaries about projective limits of sets, topological vector spaces,
manifolds and vector bundles from \cite{split, Gal-TM, Sch}.

Let $\I$ be a directed set. Remind that $\I$ is directed if it is an ordered set with the reflexive, transitive and anti-symmetric order "$\leq$" such that for any $i,j\in\I$ there exists $k\in\I$ with $i\leq k$ and $j\leq k$.
Let $\{S_i\}_{i\in \I}$ be a family  of nonempty sets. Moreover suppose
that for $j\geq i$ in $\I$, there exists a map $f_{ji};S_j\to S_i$, known as the connecting morphism, such that
$f_{ii}=id_{S_i}$ and $f_{ji}\o f_{kj}=f_{ki}$ for $k\geq j\geq i$ in $\I$. Then $\mathcal{S}=\{S_i,f_{ji}\}_{i,j\in \I}$
is called a projective system of sets. The projective limit of this system, defined by a universal property, always exists and is (set theoretically) isomorphic to a subset of $\prod_{i\in \I}S_i $ \cite{Sch}. More precisely the projective limit of the system $\mathcal{S}$, denoted by $\varprojlim S_i$, contains those elements  $(x_i)_{i\in I}\in \prod_{i\in \I}S_i$ for which $f_{ji}(x_j)=x_i$ for all $i,j\in \I$ with $j\geq i$.

Let   $\{\E_i\}_{i\in \I}$ be family of \textbf{topological vector spaces} and $\r_{ji}:\E_j\to \E_i$, $j\geq i$ be \textbf{continuous and linear}. Then the projective  family $\{\E_i,\r_{ji}\}_{i\in \I}$ is called a projective systems of topological vector spaces. Note that   every Fr\'{e}chet space is isomorphic to a projective limit of a countable family of Banach spaces (\cite{Sch}, p. 53).

In the sequel suppose that ${\I}=\N$ and  consider the family $\mathcal{M}=\{M_i,$ $\varphi_{ji}\}_{i,j\in\N}$ where $M_i$, $i\in\N$, is a  \textbf{manifold} modeled on the  Banach space $\E_i$ and $\varphi_{ji}:M_j\to M_i$ is a differentiable map for $j\geq i$. Moreover suppose that the following conditions hold.\\
i) The model spaces $\{\E_i,\r_{ji}\}_{i,j\in\N}$ form a projective system of topological  vector spaces.\\
ii) For any $x=(x_i)_{i\in\N}\in M=\varprojlim M_i$ there exists a projective family of charts $\{(U_i,\p_i)\}$ such that $x_i\in U_i\subseteq M_i$ and for $j\geq i$, $\r_{ji}\o\p_j=\p_i\o\varphi_{ji}$ \cite{Gal-TM}.

In this case $M:=\varprojlim M_i$ may be considered as a generalized Fr\'{e}chet manifold modeled on the Fr\'{e}chet space $\E=\varprojlim \E_i$ with the atlas $\{(\varprojlim U_i,\varprojlim\p_i)\}$.

Finally, suppose that  $(E_i,p_i,M_i)$, $i\in\mathbb{N}$, be a family of  a Banach
vector bundles   with the fibres of type $\mathbb{E}^i$ respectively. Moreover suppose that
$\{(E_i,f_{ji})\}_{i\in\mathbb{N}}$ and $\{\mathbb{E}^i,\lambda_{ji}\}_{i,j\in \mathbb{N}}$ are projective systems of manifolds and  Banach spaces respectively. Then, the system $\{(E_i,p_i,M_i), (f_{ji},\varphi_{ji},\lambda_{ji})\}_{i\in\mathbb{N}}$ is called a
\textbf{strong projective system of Banach vector bundles} on  $\{(M_i,\varphi_{ji})\}_{i\in\mathbb{N}}$ if;\\
for any ${(x_i)}_{i\in\mathbb{N}}$, there exists a projective
system of trivializations $(U_i,\tau_i)$ (here
$\tau_i:{p_i}^{-1}(U_i)\longrightarrow U_i\times \mathbb{E}_i$ are
local diffeomorphisms which are linear on fibres) of $(E_i,p_i,M_i)$ such that $x_i\in M_i$,
$U=\varprojlim U^i$ is open in $M$ and
$(\varphi_{ji}\times\lambda_{ji})\circ\tau_j=\tau_i\circ f_{ji}$
for all $i,j\in{\mathbb{N}}$ with $j\geq i$.

\begin{Rem}
Let $\{M_i, \varphi_{ji}\}$ be a projective family of Banach manifolds.
It is known   that the family $\{TM_i, q_{ji}\}_{i,j\in\N}$
also form a projective system of Banach manifolds (vector bundles) \cite{Gal-TM, split} where the connecting morphisms are given by
\begin{eqnarray*}
 q_{ji}:TM_j &\to & TM_i \\
 \left[ f,x\right] ^j &\mto & [\vp_{ji}\o f , \vp_{ji}\o x]^i
\end{eqnarray*}
and $[.,.]^i$ stands for the tangent vectors in $TM_i$.

For any element $(x_i)_{i\in\N}$ in $M:=\vlim M_i$  consider the family of  charts $\{(U^i_\a, \p^i_\a)\}_{i\in\N}$ of  $\{M_i\}_{i\in\N}$, around $(x_i)_{i\in\N}$ such that $(U_\a=\vlim U^i_\a, \p_\a=\vlim\p^i_\a)$ is a chart of $M$. Fix the atlas  $\{(U_\a=\vlim U_\a^i, \p_\a=\vlim\p_\a^i)\}_{\a\in\I}$  for $M$.

Using this atlas we can define a generalized vector bundle structure  on  $\pi:TM\to M$ \cite{Gal-TM, split}.
More precisely  $\{U_\a^i,  \t_\a^i\}_{i\in\N}$ form a projective system of trivializations for the strong projective family of  vector bundles $\{(TM_i,\pi^i, M_i)$, $(q_{ji},\varphi_{ji},\r_{ji})\}_{j\geq i}$ where
\begin{eqnarray*}
\t^i_\a : (\pi^i)^{-1}(U_\a^i) &\to & U_\a^i\dar\E^i \\
{[f_i,x_i]}^i &\mto & \big(x_i, (\vp^i_\a \o f_i)^{'}(0)\big).
\end{eqnarray*}
Regarding  the fact $\vp_{ji}\o\pi^j=\pi^i\o q_{ji}$, for  $j\geq i$, we define the limit map $\pi_M:=\varprojlim \pi_i:TM\to M$.
The trivializations of $TM$ are given by $\t_\a=\vlim \t_\a^i:\pi^{-1}(U_\a)\to U_\a\times \F$ where $\F=\vlim\E_i$ \cite{Gal-TM, split}.

\end{Rem}

In order to reduce our computations, we state the following lemma.

%
%
\begin{Lem}\label{lem projective system map}
Let $\{M_i, \vp_{ji}\}_{i,j\in \N}$ and $\{N_i, \s_{ji}\}_{i,j\in \N}$ be two projective system of manifolds with the limits $M=\vlim M_i$ and $N=\vlim N_i$. The system $\{f_i:M_i\to N_i\}$ is a projective system of maps if and only if $\s_{i+1,i}\o f_{i+1} = f_i\o \p_{i+1,i}$, where
$\s_{i+1~i}:=\s_{i+1,i}$ and $\p_{i+1~i}:=\p_{i+1,i}$, for any $i\in\N$.
\end{Lem}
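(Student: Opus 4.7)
The plan is straightforward: the statement is an "only if" that is immediate, and an "if" that follows by induction on the gap $k := j - i$. Recall that, by definition, a family $\{f_i : M_i \to N_i\}$ is a projective system of maps precisely when $\s_{ji} \circ f_j = f_i \circ \vp_{ji}$ holds for \emph{every} pair $j \geq i$ in $\N$. So only the reverse implication requires work: I must show that the compatibility for consecutive indices implies the compatibility for all $j \geq i$.

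First I would dispose of the forward direction in one sentence, by specializing the general compatibility to $j = i+1$. For the converse, I would set up induction on $k = j - i \geq 1$, with the hypothesis that $\s_{i+1,i} \circ f_{i+1} = f_i \circ \vp_{i+1,i}$ for every $i \in \N$. The base case $k = 1$ is exactly this hypothesis. For the inductive step, assuming the claim for gap $k$, I would factor the connecting morphisms through the intermediate level $i+1$ using the cocycle identities
\begin{equation*}
\vp_{j,i} = \vp_{i+1,i} \circ \vp_{j,i+1}, \qquad \s_{j,i} = \s_{i+1,i} \circ \s_{j,i+1},
\end{equation*}
valid for $j = i+k+1$, and then compute
\begin{equation*}
\s_{j,i} \circ f_j = \s_{i+1,i} \circ \s_{j,i+1} \circ f_j = \s_{i+1,i} \circ f_{i+1} \circ \vp_{j,i+1} = f_i \circ \vp_{i+1,i} \circ \vp_{j,i+1} = f_i \circ \vp_{j,i},
\end{equation*}
where the second equality uses the inductive hypothesis (applied to the pair $(i+1, j)$ with gap $k$) and the third uses the base assumption at level $i$.

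There is no real obstacle; the only thing to be careful about is the direction of the cocycle identity and keeping the index bookkeeping consistent with the convention $\vp_{ji} : M_j \to M_i$ used in the excerpt. The result is really a statement about directed systems indexed by $\N$: in a totally ordered countable index set one need only check connecting conditions on consecutive pairs, since every $\vp_{ji}$ telescopes as $\vp_{i+1,i} \circ \vp_{i+2, i+1} \circ \cdots \circ \vp_{j, j-1}$. This is precisely why, in all subsequent proofs in the paper, it will suffice to verify compatibility between level $i$ and level $i+1$ only.
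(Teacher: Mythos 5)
Your proposal is correct and follows essentially the same route as the paper: both decompose $\s_{ji}$ and $\vp_{ji}$ into consecutive connecting morphisms and repeatedly apply the consecutive-level compatibility, the paper writing this as an informal telescoping chain while you package it as an induction on the gap $j-i$. The only cosmetic difference is that the paper peels off factors at the top index $j$, whereas you factor through level $i+1$ at the bottom; the content is identical.
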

\begin{proof}
Let  $f_i\o\p_{i+1,i} = \s_{i+1,i}\o f_{i+1}$ for any $i\in \N$. Then for the natural number $j\in \N$ we have
\begin{eqnarray*}
\s_{ji}\o f_j &=& (\s_{i+1,i}\o ...\o\s_{j-1,j-2}\o\s_{j,j-1})\o f_j \\
&=& (\s_{i+1,i}\o ...\o\s_{j-1,j-2})\o(\s_{j,j-1}\o f_j) \\
&=& (\s_{i+1,i}\o ...\o\s_{j-1,j-2})\o(f_{j-1}\o\p_{j,j-1}) \\
&=& \s_{i+1,i}\o ...\o f_{j-2}\o\p_{j-1,j-2}\o\p_{j,j-1} \\
&\vdots &\\
&=& f_i\o(\p_{i+1,i}\o ...\o\p_{j-1,j-2}\o\p_{j,j-1}) \\
&=& f_i\o\p_{ji}.
\end{eqnarray*}
The converse is trivial.
\end{proof}
%
%
\begin{Rem}
The family ${\{T^iM\}}_{i\in\N}$ with the connecting morphisms $\pi_{i+1,i}:=\pi_i$ form a projective system of Banach manifolds. The connecting morphisms for the model spaces are $\r_{i+1,i}:\E_{i+1}=\E^{2^{i+1}}\to\E_i=\E^{2^i}$; $(x,y)\in\E_i^2\mto x\in \E_i$, $i\in\N$, and the projective family of charts is given by $\{(U_{\a_i},\p_{\a_i})\}_{i\in\N}$ with the limit  $
(U_{\a_\infty}=\vlim U_{\a_i},\p_{\a_\infty}=\vlim\p_{\a_i})$. The limit of this family is denoted by $T^\infty M:=\vlim T^iM$ and it is called the \textbf{infinite order iterated tangent bundle} of $M$.
\end{Rem}
%
%
%
%
\begin{Pro}\label{lem X^ci is a projectivesystem}
Let  $X\in\mathfrak{X}(TM)$ and set
\begin{equation}\label{complete lift of vector field}
X^{c_1}:=X^c=D\k_{2}\o \k_{3}\o DX\o\k_{2}
\end{equation}
and, for $j\in \N$, $X^{c_j}:=(X^{c_{j-1}})^c$ with $X^{c_0}=X$. The following statements hold true.\\
i) $D^2\pi_{0}\o X^{c_1}=X\o D\pi_{0}$.\\
ii) For any $i\in\N$
\begin{equation*}
D^2\pi_{i}\o X^{c_{i+1}}=X^{c_{i}}\o D\pi_{i}.
\end{equation*}
iii) For any $i,j\in \mathbb{N}$ with $j\geq i$ we have
\begin{equation}\label{X c_i form a projective system}
D^2\pi_{j,i}\o X^{c_{j}}=X^{c_{i}}\o D\pi_{j,i},
\end{equation}
where  $\pi_{j, i}:=\pi_{i}\o \pi_{i+1}\o\dots\o \pi_{j-1}:T^jM\to T^iM$.
\end{Pro}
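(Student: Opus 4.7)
My plan is to handle the three parts in order, noting that (i) is really the $i=0$ instance of (ii) and that (iii) follows from (ii) by a straightforward induction, so the computational core lies in (ii).

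For (i) and (ii), the strategy is to unfold the definition of the complete lift and repeatedly apply the relations (1)--(5) between $\kappa_r$ and the projections, together with functoriality of the tangent functor $D$. Explicitly, for (i) I would start from
\[
D^2\pi_0\circ X^{c_1}=D^2\pi_0\circ D\kappa_2\circ\kappa_3\circ DX\circ\kappa_2,
\]
and rewrite $D^2\pi_0\circ D\kappa_2=D(D\pi_0\circ\kappa_2)$. Relation (3) gives $D\pi_0=\pi_1\circ\kappa_2$, so $D\pi_0\circ\kappa_2=\pi_1\circ\kappa_2^{2}=\pi_1$ by relation (1). Hence $D^2\pi_0\circ D\kappa_2=D\pi_1=\pi_2\circ\kappa_3$, and a second use of $\kappa_3^2=Id$ collapses the expression to $\pi_2\circ DX\circ\kappa_2$. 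To finish, I would invoke the naturality of the tangent bundle projection, i.e.\ for any smooth $f:N\to P$ one has $\pi_{TP}\circ Df=f\circ\pi_{TN}$; applied to $X:TM\to T^2M$ this yields $\pi_2\circ DX=X\circ\pi_1$. Combining with $\pi_1\circ\kappa_2=D\pi_0$ delivers $X\circ D\pi_0$, which is exactly (i).

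For (ii) the same manipulation works verbatim with every index shifted by $i$: $X^{c_{i+1}}=(X^{c_i})^c$ is unfolded as $D\kappa_{i+2}\circ\kappa_{i+3}\circ DX^{c_i}\circ\kappa_{i+2}$; functoriality plus relation (3) applied at levels $i+1$ and $i+2$ together with $\kappa^2=Id$ reduce $D^2\pi_i\circ X^{c_{i+1}}$ to $\pi_{i+2}\circ DX^{c_i}\circ\kappa_{i+2}$; naturality of the projection gives $\pi_{i+2}\circ DX^{c_i}=X^{c_i}\circ\pi_{i+1}$; and a final application of $D\pi_i=\pi_{i+1}\circ\kappa_{i+2}$ closes the identity. (So (i) drops out as the case $i=0$.)

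For (iii) I would induct on $j-i\ge 0$. The case $j=i$ is $\pi_{j,i}=\mathrm{id}$, and $j=i+1$ is exactly (ii). For the inductive step, write $\pi_{j+1,i}=\pi_{j,i}\circ\pi_j$, so $D\pi_{j+1,i}=D\pi_{j,i}\circ D\pi_j$ and $D^2\pi_{j+1,i}=D^2\pi_{j,i}\circ D^2\pi_j$ by functoriality of $D$. Then
\[
D^2\pi_{j+1,i}\circ X^{c_{j+1}}=D^2\pi_{j,i}\circ(D^2\pi_j\circ X^{c_{j+1}})=D^2\pi_{j,i}\circ X^{c_j}\circ D\pi_j
\]
by (ii), and the inductive hypothesis applied to the first two factors gives $X^{c_i}\circ D\pi_{j,i}\circ D\pi_j=X^{c_i}\circ D\pi_{j+1,i}$, completing the step.

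The only real point requiring care is bookkeeping: ensuring that each $\kappa_r$, $\pi_r$, $D\pi_r$, $D^2\pi_r$ and $DX^{c_i}$ is composed at the correct level of the tower $\{T^kM\}$ so that the relations (1)--(5) can be cited with the right index. No new analytic input is needed; everything is a consequence of the identities already collected before Definition \ref{Def complete lift of function} plus the naturality statement $\pi_{TP}\circ Df=f\circ\pi_{TN}$, which I would state once at the start of the proof.
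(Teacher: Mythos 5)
Your proposal is correct and follows essentially the same route as the paper: parts (i) and (ii) are exactly the paper's computation (unfolding the complete lift and using relations (1), (3) together with naturality of the tangent projection), with only a cosmetic difference in how the factors are grouped. For part (iii) you re-prove by induction on $j-i$ what the paper obtains by citing Lemma \ref{lem projective system map}, but since that lemma is itself proved by the same telescoping of adjacent squares, the argument is substantively identical.
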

\begin{proof}
\textbf{i.})
Using equations   $(2)$ with $r=1$  and (\ref{complete lift of vector field}) we see that
\begin{eqnarray*}
DD\pi_{0}\o X^{c_1}&=&D(\pi_1\o\k_{2})\o X^{c_1}\\
&=& D\pi_1\o D\k_{2}\o D\k_{2}\o\k_{3}\o DX\o\k_{2}  \\
&=&D\pi_1\o\k_{3}\o DX\o\k_{2}  \\
&=&\pi_{2}\o DX\o\k_{2}=X\o\pi_1\o\k_{2}=X\o D\pi_{0}.  \\
\end{eqnarray*}

\textbf{ii.}) For the natural number $i$ we see that
\begin{eqnarray*}
D^2\pi_{i}\o X^{c_{i+1}}&=&D(\pi_{i+1}\o\k_{i+2})\o X^{c_{i+1}}\\
&=& D\pi_{i+1}\o D\k_{i+2}  \o D\k_{i+2}\o\k_{i+3}\o DX^{c_i}\o\k_{i+2}\\
&=&D \pi_{i+1}\o \k_{i+3}\o DX^{c_i}\o\k_{i+2}\\
&=&\pi_{i+2}\o DX^{c_i}\o\k_{i+2}\\
&=&X^{c_i}\o\pi_{i+1}\o\k_{i+2}\\
&=&X^{c_i}\o D\pi_{i}\\
\end{eqnarray*}
which proves the second assertion. Equivalently,  the following diagram is commutative.
\[%
\begin{array}
[c]{ccccccc}%
T(T^{i+1}M)   &
\overset{D\pi_{i}}{\longrightarrow} & T(T^{i}M)   & \cdots & T(TM)  &  \overset{D\pi_{0}}{\longrightarrow} & TM  \\
X^{c_{i+1}}\downarrow &  &
\downarrow X^{c_i}&  & \downarrow X^{c_{1}}&  &\downarrow X \\
T^2(T^{i+1}M)   & \overset{D^2\pi_{i}}{\longrightarrow} &
T^2(T^{i}M)  & \cdots & T^2(TM) &
\overset{D^2\pi_{0}}{\longrightarrow}& T^2M
\end{array}
\]

\textbf{iii.}) The last part of the proposition can be deduced from part (ii) and Lemma \ref{lem projective system map}. More precisely we have the following commutative diagram.
\[%
\begin{array}
[c]{ccc}%
T(T^{j}M)   &
\overset{D\pi_{j,i}}{\longrightarrow} & T(T^{i}M)     \\
X^{c_{j}}\downarrow &  &
\downarrow X^{c_i} \\
T^2(T^{j}M)   & \overset{D^2\pi_{j,i}}{\longrightarrow} &
T^2(T^{i}M)
\end{array}
\]
\end{proof}
%
%

As a consequence,  $\{X^{c_i}\}_{i\in\N}$ form a projective system maps from $\{T(T^{i}M)$, $D\pi_{i}\}_{i\in\N}$ to $\{T^2(T^{i}M), D^2\pi_{i}\}_{i\in\N}$.
Therefore we can define $X^{c_\infty}=\vlim X^{c_i}$ from $TT^\infty M=\vlim \{T (T^iM)$, $D\pi_i\}$ to $T^2(T^\infty M)=\vlim \{T^2 (T^iM), D^2\pi_{i}\}$ (See also \cite{Gal-TM} p. 5 or \cite{split} p. 6 ).
%
%
%
\begin{The}\label{theorem X c infty exists  and is a vector field}
$X^{c_\infty}=\vlim X^{c_i}$ exists and is a vector field on $TT^\infty M$.
\end{The}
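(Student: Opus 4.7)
The strategy is to exploit the projective-limit machinery prepared in Section 3. Two things require verification: first, that the projective limit $\vlim X^{c_i}$ is well-defined, and second, that the resulting map satisfies the vector-field condition on the Fr\'echet manifold $TT^\infty M$.

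\textbf{Existence.} Proposition \ref{lem X^ci is a projectivesystem}(ii) supplies the compatibility
\[
D^2\pi_i \circ X^{c_{i+1}} = X^{c_i} \circ D\pi_i, \qquad i\in\N,
\]
which is precisely the hypothesis of Lemma \ref{lem projective system map}, applied to the source projective system $\{T(T^iM),\,D\pi_i\}_{i\in\N}$ (whose limit is $TT^\infty M$) and to the target projective system $\{T^2(T^iM),\,D^2\pi_i\}_{i\in\N}$ (whose limit is $T^2(T^\infty M)$). Consequently $\{X^{c_i}\}_{i\in\N}$ is a projective system of smooth maps, and the limit
\[
X^{c_\infty}:=\vlim X^{c_i}:TT^\infty M\longrightarrow T^2(T^\infty M)
\]
is well-defined and smooth. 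The strong projective system of Banach vector bundles recalled in the Remark preceding Lemma \ref{lem projective system map} allows us to identify $T^2(T^\infty M)$ with $T(TT^\infty M)$, so $X^{c_\infty}$ truly lands in the double tangent bundle.

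\textbf{Vector-field property.} Next I would verify $\pi_{TT^\infty M}\circ X^{c_\infty}=\mathrm{id}_{TT^\infty M}$. By the same strong-projective-vector-bundle identification, the canonical tangent projection of $TT^\infty M$ factors as $\pi_{TT^\infty M}=\vlim\pi_{i+1}$, where $\pi_{i+1}:T^2(T^iM)\to T(T^iM)$ is the canonical tangent projection of $T(T^iM)$. Each $X^{c_i}$ is a vector field on $T^{i+1}M$ by the computation carried out immediately after the definition of the complete lift of a vector field, so $\pi_{i+1}\circ X^{c_i}=\mathrm{id}_{T(T^iM)}$ for every $i$. Taking projective limits termwise gives
\[
\pi_{TT^\infty M}\circ X^{c_\infty}=\vlim(\pi_{i+1}\circ X^{c_i})=\vlim\mathrm{id}_{T(T^iM)}=\mathrm{id}_{TT^\infty M},
\]
which is the vector-field condition on $TT^\infty M$.

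The actual calculation is minimal; the substance lies entirely in the bookkeeping of identifications. The trickiest point is to be confident that the tangent projection of the Fr\'echet limit $TT^\infty M$ really is the projective limit of the Banach tangent projections, and that $T(TT^\infty M)$ coincides with $\vlim T^2(T^iM)$. Both facts rest on the strong projective vector-bundle framework developed in the preliminaries, and once they are cited the remainder of the proof is formal---amounting to the observation that composition and identity are preserved under $\vlim$.
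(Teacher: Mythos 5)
Your proposal is correct and follows essentially the same route as the paper: projective-system compatibility from Proposition \ref{lem X^ci is a projectivesystem}(ii) plus Lemma \ref{lem projective system map} to define $X^{c_\infty}=\vlim X^{c_i}$, identification of $\vlim T^2(T^iM)$ with the tangent bundle of $TT^\infty M$ via the strong projective vector-bundle framework, and the termwise computation $\pi_{TT^\infty M}\circ X^{c_\infty}=\vlim(\pi_{i+1}\circ X^{c_i})=\mathrm{id}$. The only difference is one of detail: the paper explicitly exhibits the connecting morphisms $\lambda_{i+1,i}$ and the relation $\pi_{i+1}\circ D^2\pi_i=D\pi_i\circ\pi_{i+2}$ to justify the identification $\pi_{TT^\infty M}=\vlim\pi_{i+1}$, which you correctly flag as the key point but defer to the preliminaries.
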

\begin{proof}
The family of vector bundles  %
$$\{(E_i,p_i,N_i):=(T^2(T^{i}M),\pi_{i+1}, T(T^{i}M))\}_{i\in\N}$$
form a strong projective systems of vector bundles (in the sense of \cite{split} Def. 4.1).  More precisely, the fibre type of the bundle $(E_i,p_i,N_i)$, $i\in\N$, is $\E^{2^{i+1}}$ with the corresponding connecting morphism
\begin{eqnarray*}
\l_{i+1,i}:\big(\E^{2^{i-1}}\big)^4=\E^{2^{i+1}}&\to& =\big(\E^{2^{i-1}}\big)^2=\E^{2^{i}}\\
(x,y,X,Y) &\mto& (x,X).
\end{eqnarray*}
As a consequence we get, $\p_{\a_{i+1}}\o D^2\pi_{i-1}=(D\pi_{i-1}\times \lambda_{i+1,i})\o\p_{\a_{i+2}}$ where $\p_{\a_{i+k}}$, $k=1,2$, are the local charts of $T^{i+k}M$.
Then, according to proposition 4.4 of \cite{split}, $T^2T^\infty M:=\vlim T^2(T^iM)$ admits a generalized Fr\'{e}chet vector bundle structure over $TT^\infty M:=\vlim T(T^iM)$.

However, $\pi_{i+1}\o D^2\pi_i=D\pi_i\o\pi_{i+2}$ for any $i\in\N$ (\cite{Buc-Dahl}, p. 2123).
\[%
\begin{array}
[c]{ccccccc}%
T(T^{i+1}M)   &
\overset{D\pi_{i}}{\longrightarrow} & T(T^{i}M)   & \cdots & T(TM)  &  \overset{D\pi_{0}}{\longrightarrow} & TM  \\
\pi_{i+2} \uparrow &  &
\uparrow \pi_{i+1}&  & \uparrow \pi_2&  &\uparrow \pi_1 \\
T^2(T^{i+1}M)   & \overset{D^2\pi_{i}}{\longrightarrow} &
T^2(T^{i}M)  & \cdots & T^2(TM) &
\overset{D^2\pi_{0}}{\longrightarrow}& T^2M
\end{array}
\]
This last means that the limit map $\vlim \pi_{i+1}$ exists and locally maps $(x,y,X$, $Y)\in(T^2\p_{\a_\infty})^{-1}(U_{\a_\infty})$ onto $(x,y)$. As a consequence $\vlim \pi_{i+1}$ exists  and
$\vlim \pi_{i+1}:T^2(T^\infty M)\to T(T^\infty M)$ is  equal to $\pi_{TT^\infty M}$. Moreover the resulting vector bundle is  a generalized vector bundle isomorphic to $\pi_{TT^\infty M}:T^2(T^\infty M)\to T(T^\infty M)$ (the tangent bundle of $T(T^\infty M)$).

According to \cite{Buc-Dahl 1}, if $X^{c_{i-1}}\in\mathfrak{X}(T(T^{i-1}M))$ then $X^{c_i}\in\mathfrak{X}(T(T^{i}M))$.
Consequently, for any $\xi\in T(T^\infty M)$, we have
\begin{equation*}
  \pi_{TT^\infty M}\o X^{c_\infty}(\xi) =\vlim\pi_{i+1}\o X^{c_\infty}(\xi)= \big(\pi_{i+1}\o X^{c_i}(\xi_i)\big)_{i\in\N} = (\xi_i)_{i\in\N}
\end{equation*}
which means that $X^{c_\infty}$ is  a vector field on   $TT^\infty M$.
\end{proof}
%

%
%
\subsection{Flow of $X^{c_\infty}$}
In closing this section, despite of the lack of a general solvability and uniqueness theorem for ordinary differential equations on non-Banach Fr\'{e}chet manifolds (\cite{Hamilton}), we prove an existence theorem for the flow of $X^{c_\infty}$.
%
%
%
%

Let $i\in\N$,  $X\in \mathfrak{X}(T^iM)$ and  $F:\mathfrak{D}(X)\subseteq T^iM\dar \R\to T^iM$ be its  flow (see e.g. \cite{Lang}). We remark that for $\xi \in T^iM$, $F(\xi,.) : I(\xi)\subseteq \R\to T^iM$; $t\mto F_t(\xi):=F(\xi,t)$, is  an  integral curve of $X$ with $F_\xi (0)=\xi$ and $I(\xi)$  is its  maximal domain (lifetime) in $\R$.

Suppose that $F:\mathfrak{D}(X)\to T^iM$ and $F^c:\mathfrak{D}(X^c)\to T^{i+1}M$ are the flows of $X$ and $X^c$ respectively. According to \cite{Buc-Dahl 1} theorem 3.6, we have $(D\pi_{i-1}\times id_\R)\mathfrak{D}(X^c)=\mathfrak{D}(X)$ and for any $(\xi,t)\in\mathfrak{D}(X^c)$,
$F_t^c(\xi)=\k_{i+1}\o D F_t\o \k_{i+1}(\xi)$ where $DF_t$ is the differential of the map $F_t:\xi\mto F_t(\xi)$. Note that the model spaces of  manifolds in \cite{Buc-Dahl 1} are finite dimensional Euclidean spaces. However, the proof of theorem 3.6. is valid for the Banach modeled manifolds as well.

%
%
%
Let $X\in\mathfrak{X}(TM)$ and $F:\mathfrak{D}(X)\subseteq TM\times \R\to TM$ be its flow. For $i\in \N$, set $X^{c_i}=(X^{c_{i-1}})^c$ and $F^{c_i}=(F^{c_{i-1}})^c$.
\begin{The}\label{theorem projective system of flow}
For $i\in \N$, suppose that $F^{c_i}:\mathfrak{D}(X^{c_i})\subseteq T(T^{i}M)\times \R\to T(T^{i}M)$ be the flow of $X^{c_i}$. Then  the following statements hold true.\\
\textbf{i.)} $\{\mathfrak{D}(X^{c_i})\subseteq T(T^{i}M)\dar \R,(D\pi_{i}\dar id_\R)|_{\mathfrak{D}(X^{c_i})}\}_{i\in\N}$ form a projective system of open sets.\\
\textbf{ii.)} $ F^{c_\infty} = \vlim F^{c_i}$ exists. \\
\textbf{iii.)} $F^{c_\infty}$ is the flow of $X^{c_\infty} $.
\end{The}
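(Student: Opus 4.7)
The plan is to deduce all three parts from the corresponding componentwise Banach-manifold statements via Lemma \ref{lem projective system map} and the intertwining relations of Proposition \ref{lem X^ci is a projectivesystem}. The main workhorse is Theorem 3.6 of \cite{Buc-Dahl 1} which, as already noted in the excerpt, extends from the finite-dimensional setting to the Banach setting without modification, and which describes both the domain of definition of the flow of a complete lift and its value.

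For part (i), applying Theorem 3.6 of \cite{Buc-Dahl 1} to the pair $(X^{c_i}, X^{c_{i+1}}=(X^{c_i})^c)$ on the Banach manifolds $T(T^iM)$ and $T(T^{i+1}M)$ gives openness of each $\mathfrak{D}(X^{c_i}) \subseteq T(T^iM)\dar \R$ together with the relation $(D\pi_i\dar id_\R)(\mathfrak{D}(X^{c_{i+1}})) = \mathfrak{D}(X^{c_i})$. Since the maps $D\pi_i\dar id_\R$ are smooth and satisfy the obvious cocycle condition, the domains assemble into a projective system of open sets as required.

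For part (ii), by Lemma \ref{lem projective system map} it is enough to check the consecutive identity $D\pi_i \o F^{c_{i+1}}_t = F^{c_i}_t \o D\pi_i$ on $\mathfrak{D}(X^{c_{i+1}})$. Part (ii) of Proposition \ref{lem X^ci is a projectivesystem} asserts $D^2\pi_i \o X^{c_{i+1}} = X^{c_i} \o D\pi_i$, so $D\pi_i$ intertwines $X^{c_{i+1}}$ with $X^{c_i}$; the standard argument that uniqueness of integral curves in the Banach setting promotes intertwining of vector fields to intertwining of their flows then yields the desired identity. Lemma \ref{lem projective system map} produces the projective system $\{F^{c_i}\}_{i\in\N}$, so $F^{c_\infty} := \vlim F^{c_i}$ is well defined as a map from $\vlim \mathfrak{D}(X^{c_i}) \subseteq T(T^\infty M)\dar \R$ into $T(T^\infty M) = \vlim T(T^iM)$.

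For part (iii), fix $\xi = (\xi_i)_{i\in\N} \in T(T^\infty M)$. The initial condition $F^{c_\infty}_0(\xi) = \xi$ is immediate from $F^{c_i}_0 = id$, and the semigroup property $F^{c_\infty}_{t+s} = F^{c_\infty}_t \o F^{c_\infty}_s$ follows componentwise. The ODE $\partial_t F^{c_\infty}_t(\xi) = X^{c_\infty}(F^{c_\infty}_t(\xi))$ is to be read through the projective-limit description $T^2(T^\infty M) = \vlim T^2(T^iM)$ already established in the proof of Theorem \ref{theorem X c infty exists  and is a vector field}: the componentwise velocities $\partial_t F^{c_i}_t(\xi_i) = X^{c_i}(F^{c_i}_t(\xi_i))$ are compatible with the connecting morphisms $D^2\pi_i$ by part (ii) above, and their projective limit equals, at each time $t$, the value of $X^{c_\infty}$ at $F^{c_\infty}_t(\xi)$. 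The delicate point I anticipate is exactly this last step: since $T(T^\infty M)$ is only a Fr\'{e}chet manifold, the derivative $\partial_t F^{c_\infty}_t$ must be interpreted as the projective limit of the Banach-level velocities rather than as a Fr\'{e}chet difference quotient; with this convention in place the verification reduces to the componentwise identities together with the compatibility of tangent spaces with $\vlim$, both of which are already in hand.
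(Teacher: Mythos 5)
Your proposal is correct, and its overall skeleton---openness and equality of the domains from Theorem 3.6 of \cite{Buc-Dahl 1}, assembly of the limit map via Lemma \ref{lem projective system map}, and a componentwise verification of the integral-curve equation---is the same as the paper's. Where you genuinely diverge is the key commutation identity $D\pi_i\o F^{c_{i+1}}_t=F^{c_i}_t\o D\pi_i$ in part (ii): the paper derives it by a direct chart computation from the explicit flow formula $F^{c_{i+1}}_t=\k_{i+2}\o DF^{c_i}_t\o\k_{i+2}$ (the second half of Theorem 3.6 of \cite{Buc-Dahl 1}), writing $\xi=(x,y,X,Y)$ and pushing it through $\k_{i+2}$, $DF^{c_i}_t$ and $\pi_{i+1}\o\k_{i+2}=D\pi_i$, whereas you use Proposition \ref{lem X^ci is a projectivesystem}(ii) to say that $X^{c_{i+1}}$ and $X^{c_i}$ are $D\pi_i$-related and then invoke uniqueness of integral curves on Banach manifolds to promote relatedness of vector fields to relatedness of flows. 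Both arguments are valid: yours is more conceptual and needs only the domain statement of Theorem 3.6 (plus the standard related-fields-have-related-flows lemma, which does hold in the Banach setting), while the paper's computation is explicit in coordinates and reuses the flow formula it has already recorded. Note only that the flow-relatedness argument by itself gives the inclusion $(D\pi_i\times id_\R)\mathfrak{D}(X^{c_{i+1}})\subseteq\mathfrak{D}(X^{c_i})$, which is all that Lemma \ref{lem projective system map} requires; the equality you quote in part (i) supplies the rest. Your reading of $\frac{d}{dt}F^{c_\infty}_\xi$ as $\vlim\frac{d}{dt}F^{c_i}_{\xi_i}$ in part (iii) is exactly how the paper argues that step.
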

\begin{proof}

\textbf{i.)} Since $\{T^iM,\pi_{ji}\}$ is  a projective system, then $\{T(T^iM),D\pi_{ji}\}$ is  a projective system with the limit isomorphic to $ T(T^\infty M)$. It is easy to check that $\{\mathfrak{D}(X^{c_i})\subseteq T^(T^iM)\times\R,(D\pi_{ji}\times id_\R)|_{\mathfrak{D}(X^{c_i})}\}$ also form a projective system. Set   $\mathfrak{D}(X^{c_\infty}):=\vlim {\mathfrak{D}(X^{c_i})}$.  $\mathfrak{D}(X^{c_\infty})$ is an open subset of $T(T^\infty M)\times \R$ with respect to the projective topology of $T(T^\infty M)$.

\textbf{ii.)}  For any $i\in \N$ and any $\xi=(x,y,X,Y)\in T(T^{i+1}M)$ we have
\begin{eqnarray*}
D\pi_{i} \o F^{c_{i+1}}(t,\xi) &=&  D\pi_{i} \o F_t^{c_{i+1}}(x,y,X,Y)\\
&=& D\pi_{i} \o \k_{i+2} \o DF_t^{c_i} \o \k_{i+2}(x,y,X,Y)\\
&=& (\pi_{i+1}\o\k_{i+2})\o\k_{i+2} \o DF_t^{c_i}(x,X,y,Y) \\
&=& \pi_{i+1}\big(F_t^{c_i}(x,X) , dF_t^{c_i}(x,X)(y,Y) \big) \\
&=& F_t^{c_i}(x,X)
\end{eqnarray*}
and
\begin{equation*}
F^{c_i} \o (  D\pi_{i}\times id_\R)(\xi,t) = F_t^{c_i} \o D\pi_{i}(x,y,X,Y) = F_t^{c_i}(x,X)
\end{equation*}
that is the following diagram is commutative.
\[%
\begin{array}
[c]{ccc}%
\mathfrak{D}(X^{c_{i+1}})   &
\overset{F^{c_{i+1}}}{\longrightarrow} & T(T^{i+1}M)     \\
D\pi_i\times id_\R \downarrow &  &
\downarrow D\pi_i \\
\mathfrak{D}(X^{c_{i}})   & \overset{F^{c_i}}{\longrightarrow} &
T(T^{i}M)
\end{array}
\]
As a consequence of lemma \ref{lem projective system map}, $F^{c_\infty }:=\vlim F^{c_i } :\vlim \mathfrak{D}(X^{c_i })\to T(T^\infty M)$ can be defined.

\textbf{iii.)} Finally, we claim that $F^{c_\infty}:\mathfrak{D}(X^{c_\infty}):=\vlim \mathfrak{D}(X^{c_i })\to T(T^\infty M)$ is the flow of $X^{c_\infty}$. For the latter we note that $F^{c_\infty}_\xi : I(\xi)\subseteq\R\to TT^\infty M$ is an integral curve for $X^{c_\infty}$, for all $(\xi,t)\in\mathfrak{D}(X^{c_\infty})$, since
$$\frac{d}{dt} {F^{c_\infty}_\xi (t)} = \vlim\frac{d}{dt}{F_{\xi_i}^{c_i}(t)} =\vlim X^{c_i}\big({F_{\xi_i}^{c_i}}(t)\big) = X^{c_\infty}\big({F_\xi^{c_\infty}(t)}\big).$$
\end{proof}
%
%
It may be worth reminding the reader that a vector field is called complete if, all of its integral curves admit the lifetime $(-\infty , +\infty)$.
%
%
\begin{Cor}
If $X$ is  a complete vector field on $M$, then $X^{c_\infty}$ is a complete vector field on $T(T^\infty M)$ too.
\end{Cor}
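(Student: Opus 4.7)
The plan is to reduce to the finite-order case by induction and then invoke Theorem \ref{theorem projective system of flow}. Concretely, I will show that completeness is inherited by the complete lift at each finite level, so that $\mathfrak{D}(X^{c_i})=T(T^iM)\times\R$ for every $i$, and then take the projective limit of the domains.

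First, recall the formula stated just before Theorem \ref{theorem projective system of flow} (which is the Banach-manifold adaptation of Theorem 3.6 of Bucataru–Dahl): for a vector field $Y$ on $T(T^iM)$ with flow $G$, the flow $G^c$ of $Y^c$ satisfies
\begin{equation*}
G_t^c(\xi)=\k_{i+2}\o DG_t\o\k_{i+2}(\xi),
\end{equation*}
and the projection $(D\pi_{i}\times id_\R)$ sends $\mathfrak{D}(Y^c)$ onto $\mathfrak{D}(Y)$. The key observation is that if $Y$ is complete, i.e.\ $\mathfrak{D}(Y)=T(T^iM)\times\R$, then $G_t$ is a globally defined diffeomorphism for every $t\in\R$, hence so is its tangent map $DG_t$ on $T^2(T^iM)=T(T^{i+1}M)$, and therefore $G_t^c$ is globally defined for every $t\in\R$. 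In other words $Y^c$ is complete as well.

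Starting from the assumption that $X\in\mathfrak{X}(TM)$ is complete and applying this observation inductively with $Y=X^{c_{i-1}}$ (so that $Y^c=X^{c_i}$), we conclude that
\begin{equation*}
\mathfrak{D}(X^{c_i})=T(T^iM)\times\R\qquad\text{for every }i\in\N.
\end{equation*}
Now invoke Theorem \ref{theorem projective system of flow}: the family $\{\mathfrak{D}(X^{c_i})\}_{i\in\N}$ forms a projective system with limit $\mathfrak{D}(X^{c_\infty})$, and $F^{c_\infty}=\vlim F^{c_i}$ is the flow of $X^{c_\infty}$. Since the connecting morphism on the $\R$-factor is the identity, the projective limit commutes with the product by $\R$, giving
\begin{equation*}
\mathfrak{D}(X^{c_\infty})=\vlim\bigl(T(T^iM)\times\R\bigr)=\bigl(\vlim T(T^iM)\bigr)\times\R=T(T^\infty M)\times\R,
\end{equation*}
which is exactly the statement that $X^{c_\infty}$ is complete.

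There is essentially no hard step here, because the heavy lifting was done in Theorem \ref{theorem projective system of flow}; the only point deserving care is verifying that the compatibility of flows with the connecting morphisms, already established there, is compatible with the trivial factor $\R$ so that the product limit identification in the last display is legitimate. This is routine, since the connecting morphisms in the projective system of domains are $(D\pi_i\times id_\R)$, and the identity on $\R$ makes the $\R$-coordinate pass through the inverse limit unchanged.
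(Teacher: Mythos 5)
Your argument is correct and is essentially the intended one: the paper states this corollary without a separate proof, as an immediate consequence of Theorem \ref{theorem projective system of flow} together with the fact (via the Bucataru--Dahl flow formula $F_t^c=\k_{i+2}\o DF_t\o\k_{i+2}$) that completeness is inherited by each finite-level lift $X^{c_i}$, and then passes to the projective limit of the domains. Your induction plus the identification $\vlim\bigl(T(T^iM)\times\R\bigr)=T(T^\infty M)\times\R$ fills in exactly those steps.
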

%
%
With the notations as in theorem \ref{theorem projective system of flow}, we have the following useful result.
\begin{Cor}\label{cor existence of integral curve for X c infty}
For any $\xi =(\xi_i)_{i\in\N}\in T(T^\infty M)$, the unique integral curve $F^{c_\infty}_{\xi}=\vlim F^{c_i}_{\xi_i} : I(\xi)\to T(T^\infty M)$ with $F^{c_\infty}_{\xi}(0)=\xi^\infty$ exists.
\end{Cor}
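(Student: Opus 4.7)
The plan is to extract the pointwise statement directly from the three conclusions of Theorem \ref{theorem projective system of flow}. Fix $\xi = (\xi_i)_{i\in\N} \in T(T^\infty M)$. First I would note that each $\xi_i$ lies in the Banach manifold $T(T^iM)$, where the classical Cartan--Hadamard/Lang theory applies to the smooth vector field $X^{c_i}$; therefore a unique maximal integral curve $F^{c_i}_{\xi_i}:I(\xi_i)\to T(T^iM)$ with $F^{c_i}_{\xi_i}(0)=\xi_i$ exists. Define $I(\xi):=\bigcap_{i\in\N}I(\xi_i)$, which is an open interval containing $0$ because each $I(\xi_i)$ is open in $\R$ and, by part (i) of the theorem, the family of domains is compatible under the connecting morphisms $D\pi_i\times id_\R$.

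Next I would verify the projective-system compatibility of the curves. Using part (ii) together with the commutative square appearing in its proof, for $j\geq i$ and $t\in I(\xi)$ one has
\begin{equation*}
D\pi_{j,i}\circ F^{c_j}_{\xi_j}(t) \;=\; F^{c_i}_{D\pi_{j,i}(\xi_j)}(t) \;=\; F^{c_i}_{\xi_i}(t),
\end{equation*}
where the second equality uses the definition of $\xi$ as an element of the projective limit, namely $D\pi_{j,i}(\xi_j)=\xi_i$. By Lemma \ref{lem projective system map}, $\{F^{c_i}_{\xi_i}\}_{i\in\N}$ is a projective system of smooth maps on $I(\xi)$, so the limit curve
\begin{equation*}
F^{c_\infty}_{\xi} \;:=\; \vlim F^{c_i}_{\xi_i}\;:\;I(\xi)\to T(T^\infty M)
\end{equation*}
is well defined, with $F^{c_\infty}_{\xi}(0)=(\xi_i)_{i\in\N}=\xi$.

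Finally I would check that $F^{c_\infty}_\xi$ is an integral curve of $X^{c_\infty}$: this is exactly the calculation carried out at the end of the proof of part (iii) applied coordinatewise, namely $\tfrac{d}{dt}F^{c_\infty}_\xi(t)=\vlim X^{c_i}(F^{c_i}_{\xi_i}(t))=X^{c_\infty}(F^{c_\infty}_\xi(t))$. For uniqueness, I would argue that any other integral curve $c:J\to T(T^\infty M)$ of $X^{c_\infty}$ through $\xi$ projects, via $D\pi_{\infty,i}$, to an integral curve of $X^{c_i}$ through $\xi_i$; Banach uniqueness forces each projection to coincide with $F^{c_i}_{\xi_i}$ on $J\cap I(\xi_i)$, hence $c=F^{c_\infty}_\xi$ on the intersection.

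The main obstacle I anticipate is not the existence construction, which is essentially bookkeeping on top of the theorem, but the uniqueness statement: outside the Banach world one cannot appeal to a direct Picard--Lindelöf argument on $T^\infty M$, so uniqueness must be reduced level-by-level through the projections $D\pi_{\infty,i}$, which is why the projective-limit topology on $T(T^\infty M)$ (and the separating family of maps $\{D\pi_{\infty,i}\}$) is crucial to make the argument airtight.
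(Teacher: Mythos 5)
Your construction is essentially the one the paper intends: the corollary is stated there as an immediate pointwise consequence of Theorem \ref{theorem projective system of flow}, and your unpacking --- level-wise existence and uniqueness on the Banach manifolds $T(T^iM)$, the commutative square of part (ii) giving $D\pi_{j,i}\circ F^{c_j}_{\xi_j}=F^{c_i}_{\xi_i}$, Lemma \ref{lem projective system map} to form $\varprojlim F^{c_i}_{\xi_i}$, and the part (iii) computation for the integral-curve property --- is exactly that reading. Your uniqueness argument (project any integral curve of $X^{c_\infty}$ through $\xi$ by the canonical maps onto $T(T^iM)$, use that these projections intertwine $X^{c_\infty}$ with $X^{c_i}$, invoke Banach uniqueness level by level, and conclude since the projections separate points of the projective limit) is the intended one and is in fact the only place where ``unique'' gets an actual justification, so it is a welcome addition rather than a deviation.

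One step is under-justified as written: you set $I(\xi)=\bigcap_{i\in\N}I(\xi_i)$ and claim it is an open interval ``by part (i)''. Projective compatibility of the domains only yields the nested inclusions $I(\xi_{i+1})\subseteq I(\xi_i)$, and a countable nested intersection of open intervals containing $0$ can collapse to $\{0\}$; this shrinking of lifetimes is precisely the failure mode one must rule out in the Fr\'{e}chet setting, and the corollary would be vacuous if it occurred. The missing input is the Bucataru--Dahl relation quoted in the paper just before Theorem \ref{theorem projective system of flow}, namely $(D\pi_{i}\times id_\R)\mathfrak{D}(X^{c_{i+1}})=\mathfrak{D}(X^{c_i})$ together with $F^{c_{i+1}}_t=\k_{i+2}\circ DF^{c_i}_t\circ\k_{i+2}$: since $\pi_{i+1}\circ\k_{i+2}=D\pi_i$, the right-hand side is defined exactly when $t\in I\big(D\pi_i(\xi_{i+1})\big)=I(\xi_i)$, so the lifetime is preserved at each lifting step, all the $I(\xi_i)$ coincide, and $I(\xi)=I(\xi_1)$ is genuinely open and nontrivial. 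With that observation inserted, your proof is complete and matches the paper's route.
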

%
%
%
%
\begin{Rem}\label{Rem complete and vertical lifts of functions is not a projective system}
Let $f\in C^\infty(M)$.  Set  $f^{c_1}=f^c$ and $f^{c_i}={f^{c_{i-1}}}^c$. Then according to definition \ref{Def complete lift of function},
\begin{eqnarray*}
f^{c_i}=d(f^{c_{i-1}})\o \k_{i}.
\end{eqnarray*}
If we consider the projective systems $\{T^iM,\pi_i\}_{i\in\N}$ and $\{\R,id_\R\}_{i\in\N}$ with the limits $T^\infty M=\vlim T^iM $ and $\R=\vlim \R$ respectively, then $f^{c_i}\o\pi_i\neq id_\R\o f^{c_{i+1}}$. In fact for any $(x,y,X,Y)\in T^{i+1}M$
\begin{equation*}
f^{c_i} \o \pi_i (x,y,X,Y)= f^{c_i}(x,y)
\end{equation*}
while
\begin{eqnarray*}
id_\R \o f^{c_{i+1}} (x,y,X,Y) &=& df^{c_i} \o \k_{i+1} (x,y,X,Y)= df^{c_i}(x,X,y,Y)\\
&=& \pa_1 f^{c_i}(x,X)y + \pa_2 f^{c_i}(x,X)Y.
\end{eqnarray*}
As a consequence,  the sequence $\{f^{c_{i}}\}_{i\in\N}$  is not a projective system of maps and consequently we can not define $f^{c_\infty}$ as the limit $\vlim f^{c_i}$. Similarly one can show that, the iterated  vertical lift of functions and lift of forms, generally, do not form projective systems. We refer to \cite{Ab-Man} p. 39, \cite{Asht} p. 204 and \cite{floris} p. 549 for special classes of functions and forms on projective limits of manifolds.
\end{Rem}
%
%
%
\subsection{Lift of semisprays to $T^\infty M$} In this section, for a given semispray  on $M$ we shall try to lift it to a semispray on $T^\infty M$.
%
%
\begin{Pro}\label{proposition S^infty}
For a given semispray  $S$ on $M$ its iterated complete lift $S^{c_\infty}$ is a semispray  on $T^\infty M$.
\end{Pro}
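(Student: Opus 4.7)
The strategy is to reduce the semispray identity for $S^{c_\infty}$ to the levelwise versions proved in Proposition~\ref{S^c}, and to transport them through the projective-limit machinery of Theorem~\ref{theorem X c infty exists  and is a vector field}. Since a semispray $S$ on $M$ is in particular a smooth vector field on $TM$, Theorem~\ref{theorem X c infty exists  and is a vector field} already delivers $S^{c_\infty}=\vlim S^{c_i}$ as a well-defined smooth vector field on $TT^\infty M$; what remains is to verify the additional identity $\k^\infty\o S^{c_\infty}=S^{c_\infty}$, where $\k^\infty$ denotes the canonical involution of $T^2(T^\infty M)$.

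To pin down $\k^\infty$, I would assemble the finite-level involutions $\{\k_{i+2}:T^2(T^iM)\to T^2(T^iM)\}_{i\in\N}$ into a projective system with respect to the connecting morphisms $D^2\pi_i$. Relation~(4) of the excerpt yields $D^2\pi_i\o\k_{i+3}=\k_{i+2}\o D^2\pi_i$ for every $i\in\N$, which is precisely the compatibility condition isolated in Lemma~\ref{lem projective system map}; accordingly $\k^\infty:=\vlim\k_{i+2}$ is a well-defined smooth endomorphism of $T^2(T^\infty M)$, and it plays the role of the canonical involution inherited from the generalized Fr\'echet vector bundle structure on $T^2(T^\infty M)\to TT^\infty M$ cited in Theorem~\ref{theorem X c infty exists  and is a vector field}.

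Next, a routine induction on $i$, beginning from the base case $S^{c_0}=S$ and applying Proposition~\ref{S^c} at each step, would show that every iterate $S^{c_i}$ is a semispray on $T^iM$, so that $\k_{i+2}\o S^{c_i}=S^{c_i}$ for every $i\in\N$. Evaluating at an arbitrary $(\xi_i)_{i\in\N}\in T(T^\infty M)$ then yields
\[
\k^\infty\o S^{c_\infty}\bigl((\xi_i)_i\bigr)=\bigl(\k_{i+2}\o S^{c_i}(\xi_i)\bigr)_i=\bigl(S^{c_i}(\xi_i)\bigr)_i=S^{c_\infty}\bigl((\xi_i)_i\bigr),
\]
which is exactly the semispray property of $S^{c_\infty}$ on $T^\infty M$.

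The only subtlety I foresee is a bookkeeping one: confirming that $\vlim\k_{i+2}$ actually coincides with the intrinsic canonical involution on the Fr\'echet bundle $T^2(T^\infty M)\to TT^\infty M$. Once that identification is secured, using the compatibility of the local trivializations with the finite-level involutions in exactly the manner exploited in the proof of Theorem~\ref{theorem X c infty exists  and is a vector field}, the semispray identity is a formal consequence of the functoriality of $\vlim$, and no new local computation is required beyond those already encoded in equations $(1)$--$(5)$.
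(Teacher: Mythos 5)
Your argument is correct, and its skeleton coincides with the paper's: obtain $S^{c_\infty}=\vlim S^{c_i}$ as a vector field on $TT^\infty M$ from the projective-limit machinery (Proposition~\ref{lem X^ci is a projectivesystem} and Theorem~\ref{theorem X c infty exists  and is a vector field}), and feed in Proposition~\ref{S^c} levelwise to know each $S^{c_i}$ is a semispray. The only divergence is which of the two equivalent defining identities you push to the limit. You verify the involution form $\k^\infty\o S^{c_\infty}=S^{c_\infty}$, which forces you to first build $\k^\infty=\vlim\k_{i+2}$; your justification via relation (4) (with $r=i+1$, giving $D^2\pi_i\o\k_{i+3}=\k_{i+2}\o D^2\pi_i$) together with Lemma~\ref{lem projective system map} is valid, and the componentwise evaluation then closes the argument. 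The paper instead checks $D\pi_{T^\infty M}\o S^{c_\infty}=\mathrm{id}$, using that $\{D\pi_i\}_{i\in\N}$ is a projective system whose limit $D\pi_{T^\infty M}$ is already part of the established bundle structure; this avoids introducing $\k^\infty$ altogether and is the form of the semispray condition actually used afterwards (e.g.\ in Theorem~\ref{The existence and uniquencess of geodesic semisprays T infty M}, where $D\pi_{T^\infty M}\o S^{c_\infty}=\mathrm{id}$ drives the geodesic computation). Your flagged ``subtlety''---whether $\vlim\k_{i+2}$ is the intrinsic involution of $T^2(T^\infty M)$---is therefore not needed if you simply switch to the $D\pi$ characterization; alternatively it can be settled directly, since in the projective charts every $\k_{i+2}$ has the local form $(x,y,X,Y)\mapsto(x,X,y,Y)$, so the limit map has the same local expression and the identity $\k^\infty\o S^{c_\infty}=S^{c_\infty}$ forces the first fibre component of $S^{c_\infty}$ to be the identity, recovering $D\pi_{T^\infty M}\o S^{c_\infty}=\mathrm{id}$. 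In short: correct proof, same strategy, with a slightly heavier but workable final verification than the paper's.
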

\begin{proof}
Since $S$ is a vector field on $TM$, then  proposition \ref{lem X^ci is a projectivesystem} ensures us  that $\{S^{c_i}:T(T^{i}M)\to T^2(T^{i}M)\}$ forms a projective system of vector fields with the limit $S^{c_\infty}\in\mathfrak{X}(T(T^\infty M))$.

Proposition \ref{S^c} shows that  for any $i\in\N$, $S^{c_i}=(S^{c_{i-1}})^c$ is  a semispray on $T^iM$.

Moreover one can check that $\{D\pi_{i}\}_{i\in\N}$ form a projective system maps with the limit $D\pi_{T^\infty M}$ where $\pi_{T^\infty M} = \vlim \pi_{i} :TT^\infty M \to T^\infty M$ is the tangent bundle of $T^\infty M$. Consequently for any $\xi\in T^\infty M$,
$$ D\pi_{\infty} \o S^{c_\infty}(\xi) = \vlim\big(D\pi_{i}\o S^{c_i}(\xi_i)\big) =\vlim(\xi_i)=\xi,$$
which completes the proof.
\end{proof}
%
%
%
%
%
%
%
Let $S$ be a semispray on $T^\infty M$. Keeping the formalism of section \ref{def spray}, a geodesic with respect to   $S$ is a curve $\g:(-\eps, \eps)\subset \R \to T^\infty M$ such that $\g''(t) =S(\g'(t))$, $\eps>0$ and  $t\in (-\eps, \eps)$.
%
%
Now suppose that $S$ be a semispray on $M$ and $S^{c_\infty }$ be its lift to $T^\infty M$ given by proposition \ref{proposition S^infty}.
\begin{The}\label{The existence and uniquencess of geodesic semisprays T infty M}
For any $x=(x_i)_{i\in\N} \in T^\infty M$, $\xi=(\xi_i)_{i\in\N} \in T_xT^\infty M$, there exists a unique geodesic  $\g_\xi:(-\eps, \eps)\to T^\infty M$ for $S^{c_\infty}$ such that $\g_\xi(0)= x$, ${\g_\xi}^{\prime}(0)=\xi$. Moreover  $\g_\xi=\vlim\g_{\xi_i}$ where $\g_{\xi_i}:(-\eps, \eps)\to T(T^iM)$ is a geodesic of $S^{c_i}$ with $\g_{\xi_i}(0)=x_i$ and $\g_{\xi_i}^\prime(0)=\xi_i$, $i\in \N$.
\end{The}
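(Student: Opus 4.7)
The strategy is to reduce the geodesic problem for $S^{c_\infty}$ on the Fr\'{e}chet manifold $T^\infty M$ to the existence and uniqueness of integral curves for $S^{c_\infty}$ itself, which has already been settled in Corollary \ref{cor existence of integral curve for X c infty}. Proposition \ref{proposition S^infty} guarantees that $S^{c_\infty}$ is a semispray on $T^\infty M$; in particular the identity $D\pi_{T^\infty M}\o S^{c_\infty}=\mathrm{id}_{TT^\infty M}$ holds, as it is inherited, level by level, from $D\pi_{i}\o S^{c_i}=\mathrm{id}_{T(T^iM)}$.

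First I would apply Corollary \ref{cor existence of integral curve for X c infty} to the vector field $S^{c_\infty}$ at the point $\xi=(\xi_i)_{i\in\N}\in T_xT^\infty M$. This yields a unique integral curve
\[
\beta \;=\; F^{c_\infty}_\xi \;=\; \vlim F^{c_i}_{\xi_i} \,:\, I(\xi)\to TT^\infty M
\]
with $\beta(0)=\xi$ and $\beta'(t)=S^{c_\infty}(\beta(t))$, defined on a nontrivial interval $(-\eps,\eps)\subseteq I(\xi)$ thanks to the openness of $\mathfrak{D}(S^{c_\infty})$ in $TT^\infty M\dar\R$ established in Theorem \ref{theorem projective system of flow}(i). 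I then set $\g_\xi:=\pi_{T^\infty M}\o \beta:(-\eps,\eps)\to T^\infty M$. Differentiating and invoking the semispray identity gives
\[
\g_\xi'(t) \;=\; D\pi_{T^\infty M}\bigl(\beta'(t)\bigr) \;=\; D\pi_{T^\infty M}\bigl(S^{c_\infty}(\beta(t))\bigr) \;=\; \beta(t),
\]
whence $\g_\xi''(t)=\beta'(t)=S^{c_\infty}(\g_\xi'(t))$, which is the required geodesic equation. The initial conditions are $\g_\xi(0)=\pi_{T^\infty M}(\xi)=x$ and $\g_\xi'(0)=\beta(0)=\xi$.

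For the projective-limit description, I would use that $\pi_{T^\infty M}=\vlim \pi_i$ together with $\beta=\vlim F^{c_i}_{\xi_i}$ to obtain
\[
\g_\xi \;=\; \pi_{T^\infty M}\o\beta \;=\; \vlim\bigl(\pi_i\o F^{c_i}_{\xi_i}\bigr) \;=\; \vlim \g_{\xi_i},
\]
where at each Banach stage $\g_{\xi_i}:=\pi_i\o F^{c_i}_{\xi_i}$ is the unique geodesic of $S^{c_i}$ on $T^iM$ with $\g_{\xi_i}(0)=x_i$ and $\g_{\xi_i}'(0)=\xi_i$, obtained from the standard existence--uniqueness theorem for second order ODEs on Banach manifolds. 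Uniqueness of $\g_\xi$ itself is then immediate: any other geodesic $\tilde{\g}$ of $S^{c_\infty}$ sharing the initial data produces an integral curve $\tilde{\g}'$ of $S^{c_\infty}$ at $\xi$, which by the uniqueness part of Corollary \ref{cor existence of integral curve for X c infty} must coincide with $\beta$, forcing $\tilde{\g}=\g_\xi$.

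The only place where real care is required is in guaranteeing that the common lifetime $(-\eps,\eps)$ does not degenerate as $i\to\infty$; this, however, is precisely the content of Theorem \ref{theorem projective system of flow}(i), which ensures $\mathfrak{D}(S^{c_\infty})$ is open in the projective topology of $TT^\infty M\dar\R$ and hence contains a neighborhood of $\{\xi\}\dar\{0\}$. Once this is granted, the remainder of the argument is routine projective-limit bookkeeping combined with the semispray identity.
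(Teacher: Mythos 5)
Your argument is correct and follows essentially the same route as the paper's own proof: obtain the integral curve $F^{c_\infty}_\xi=\vlim F^{c_i}_{\xi_i}$ from Corollary \ref{cor existence of integral curve for X c infty}, project it by $\pi_{T^\infty M}$, and use the semispray identity $D\pi_{T^\infty M}\o S^{c_\infty}=\mathrm{id}$ to recover the geodesic equation and the initial conditions. Your explicit treatment of uniqueness and of the factorization $\g_\xi=\vlim\g_{\xi_i}$ merely spells out what the paper leaves implicit.
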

\begin{proof}
According to corollary \ref{cor existence of integral curve for X c infty} there exists a unique curve $F_\xi^{c_\infty}:(-\eps,\eps)\to T(T^\infty M)$ such that $\frac{d}{dt}{F_\xi^{c_\infty}}(t)=S^{c_\infty}(F_\xi^{c_\infty}(t))$, $F_\xi^{c_\infty}(0)=\xi$ and $F_\xi=\vlim F^{c_i}_{\xi_i}$.

Set $\g_\xi=\pi_{T^\infty M}\o F_\xi :(-\eps, \eps)\to T^\infty M$. We claim that $\g_\xi$ is the desired geodesic. Since
\begin{eqnarray*}
\frac{d}{dt}\g_\xi(t)&=&\frac{d}{dt}(\pi_{T^\infty M}\o F_\xi^{c_\infty})(t)=D\pi_{T^\infty M}\o \frac{d}{dt}F_\xi^{c_\infty}(t)\\
&=&D\pi_{T^\infty M}\o S^{c_\infty}(F_\xi^{c_\infty}(t))\\
&=&F_\xi^{c_\infty}(t)
\end{eqnarray*}
and $F_\xi^{c_\infty}$ is the integral curve of $S^{c_\infty}$, we conclude that $$\frac{d^2}{dt^2}\g_\xi(t)=S^{c_\infty} (\frac{d}{dt}\g_\xi(t)).$$
Moreover $\g_\xi(0)=\pi_{T^\infty M}\o F_\xi^{c_\infty}(0)=\pi_{T^\infty M}(\xi)=x$ and
\begin{eqnarray*}
\frac{d}{dt}\g_\xi(t)|_{t=0}&=&\frac{d}{dt}(\pi_{T^\infty M}\o F^{c_\infty}_\xi)(t)|_{t=0}=D\pi_{T^\infty M}\o \frac{d}{dt} F^{c_\infty}_\xi(t)_{t=o}\\
&=&D\pi_{T^\infty M}\o S^{c_\infty}(F^{c_\infty}_\xi(t))_{t=0}\\
&=&D\pi_{T^\infty M}\o S^{c_\infty}(\xi)\\
&=&\xi.
\end{eqnarray*}
\end{proof}
%
%
\subsection{Lift of sprays to $T^\infty M$}
There is a very close relations between linear connections and a special type of semisprays known as sprays \cite{Buc-Dahl 1, Lang, Del-Par, ali-iejgeo}.
\begin{Def}\label{def 2-homo spray}
A semispray $S$ on $T^{i}M$ is called  a (2-homogeneous) spray if for any $\a\in I$, $\lambda\in\R$ and $(x,\xi)\in U_{\a_i}\times \E_i$, $G_\a(x,\lambda\xi)=\lambda^2G_\a(x,\xi)$.
\end{Def}
%
%
%
\begin{Pro}\label{proposition compltete lift of sprays}The following statements hold true.\\
i) The complete lift of a spray is a spray.\\
ii) If $S$ is  a spray on $M$, then $S^{c_\infty}$ is  a spray on $T^\infty M$.
\end{Pro}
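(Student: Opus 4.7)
My plan is to handle part (i) by a direct local-coordinate calculation, and then obtain part (ii) by combining (i) with the projective-limit machinery already set up in Proposition \ref{proposition S^infty}.

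For (i), I would extract from the proof of Proposition \ref{S^c} the local expression
\begin{equation*}
S^c_\a(x,y,X,Y) = \big(x,y,X,Y;\, X,\, Y,\, -2G_\a(x,X),\, -2dG_\a(x,X)(y,Y)\big),
\end{equation*}
which identifies the spray coefficient of $S^c$ on the chart $U_{\a_r}\times\E_r \cong (U_{\a_{r-1}}\times\E_{r-1})\times\E_{r-1}^2$ as
\begin{equation*}
G^c_\a\big((x,y),(X,Y)\big) = \big(G_\a(x,X),\; dG_\a(x,X)(y,Y)\big).
\end{equation*}
I would then rescale the fibre variable by $\l\in\R$ and exploit the hypothesis $G_\a(x,\l X)=\l^2 G_\a(x,X)$. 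Partially differentiating this identity in $x$ and in $X$ yields $\pa_1 G_\a(x,\l X)=\l^2\pa_1 G_\a(x,X)$ and $\pa_2 G_\a(x,\l X)=\l\,\pa_2 G_\a(x,X)$; combining the two gives $dG_\a(x,\l X)(y,\l Y)=\l^2 dG_\a(x,X)(y,Y)$. Therefore $G^c_\a((x,y),(\l X,\l Y)) = \l^2 G^c_\a((x,y),(X,Y))$, so $S^c$ is 2-homogeneous in the sense of Definition \ref{def 2-homo spray} and, together with Proposition \ref{S^c}, a spray.

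For (ii), I would first iterate (i): starting from the spray $S$ on $M$, a straightforward induction on $i$ shows that every $S^{c_i}$ is a spray on $T^iM$ with fibrewise 2-homogeneous local coefficient $G^{c_i}_\a$. Proposition \ref{proposition S^infty} has already established $S^{c_\infty}=\vlim S^{c_i}$ as a semispray on the Fr\'{e}chet manifold $T^\infty M$; on the projective chart $U_{\a_\infty}\times\F$, with $\F=\vlim\E_i$, its local coefficient is the coordinatewise map $G^{c_\infty}_\a=\vlim G^{c_i}_\a$. Because scalar multiplication on $\F$ is coordinatewise and commutes with every connecting morphism $\r_{ji}$, homogeneity passes directly to the limit,
\begin{equation*}
G^{c_\infty}_\a(x,\l\xi) = \big(G^{c_i}_\a(x_i,\l\xi_i)\big)_{i\in\N} = \big(\l^2 G^{c_i}_\a(x_i,\xi_i)\big)_{i\in\N} = \l^2 G^{c_\infty}_\a(x,\xi),
\end{equation*}
so $S^{c_\infty}$ is a spray on $T^\infty M$.

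The main subtlety, rather than the homogeneity identity itself, will be the careful identification of the spray coefficient of $S^{c_\infty}$ with $\vlim G^{c_i}_\a$: one must check that the projective chart $\p_{\a_\infty}=\vlim\p_{\a_i}$ on $T^\infty M$ induces the natural chart on $TT^\infty M$ in which the semispray form and the Fr\'{e}chet scaling action reduce to the coordinatewise ones. Once this bookkeeping is recorded, both parts reduce to routine chain-rule and componentwise reasoning.
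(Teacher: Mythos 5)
Your proposal is correct and follows essentially the same route as the paper: part (i) by checking $2$-homogeneity of the local components $G^v_\a$ and $G^c_\a$ of $S^c$ in the fibre variables, and part (ii) by invoking Proposition \ref{proposition S^infty} and passing the coordinatewise homogeneity $G^\infty_\a(x,\l\xi)=\l^2 G^\infty_\a(x,\xi)$ through the projective limit. The only cosmetic difference is that you verify $dG_\a(x,\l X)(y,\l Y)=\l^2\,dG_\a(x,X)(y,Y)$ by differentiating the homogeneity identity in $x$ and $X$ separately, whereas the paper obtains it by writing the differential as $\frac{d}{dt}G_\a(x+ty,\l(X+tY))\mid_{t=0}$ and using homogeneity inside the derivative.
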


\begin{proof}
\textbf{i.}) Let $S$ be a spray on $T^iM$. Then, $S$ is a semispray. According to proposition\ref{S^c}, $S^c$ is a semispray on $T^{i+1}M$. Now, it suffices to show that the local components of $S^c$ are $2$-homogeneous.
Locally  the semispray  $S^c$  maps $\xi=(x,y,X,Y)\in T(T^{i+1}M)$ to
\begin{eqnarray*}
\Big(x,y,X,Y;X,Y,-2G_\a^v(\xi),-2G_\a^c(\xi)\Big).
\end{eqnarray*}
However, for any $\lambda\in \R$ and $\xi\in T(T^{i+1}M)$,
\begin{equation*}
G_\a^v(x,y,\lambda X,\lambda Y)=G_\a(x,\lambda X)= \lambda^2 G_\a(x,X) = \lambda^2 G_\a^v(x,y,X,Y),
\end{equation*}
and
\begin{eqnarray*}
G_\a^c(x,y,\lambda X,\lambda Y) &=& dG_\a(x,\lambda X)(y,\lambda Y)= \frac{d}{dt}G_\a\big(x+ty, \lambda (X+tY)\big)\mid_{t=0} \\
 &=& \frac{d}{dt} \lambda^2 G_\a(x+ty,X+tY)\mid_{t=0} \\
 &=& \lambda^2 \frac{d}{dt}G_\a(x+ty, X+tY)\mid_{t=0}\\
  &=& \lambda^2 dG_\a(x,X)(y,Y) \\
  &=& \lambda^2 G_\a^c(x,y,X,Y).
\end{eqnarray*}
\textbf{ii.}) Due to proposition \ref{proposition S^infty} $S^{c_\infty}$ is a semispray on $T^\infty M$ with the local components  $G_\a^\infty:=\vlim G_\a^i$ which maps $\big((x_i)_{i\in\N},(y_i)_{i\in\N}\big)\in U_\a^\infty \times \E_\infty$ to
\begin{equation*}
\Big(x,y,y,G_\a^\infty (x,y)\Big)
\end{equation*}
Part one implies that for any $i\in \N$, $S^{c_i}$ is a spray on $T^iM$. Let $\{G_\a^i\}$ be the family of  local components of $S^{c_i}$. Then,
$$G_\a^\infty(x,\lambda y)=\vlim G_\a^i(x_i,\lambda y_i)=\lambda^2\vlim G_\a^i(x_i,y_i)=\lambda^2 G_\a^\infty(x,y)$$
for any $\lambda\in \R$ and $(x,y)\in U_\a^\infty\times \E_\infty$.
\end{proof}
%
%
%
%
%
\begin{Rem}\label{Rem meaning of geodesics in frechet geometry}
Suppose that at any step we have a metric spray $S^{c_i}$ with a compatible Riemannian metric $g_i$ (for more details see  \cite{Lang}). Then, the meaning of a geodesic for the spray  $S^{c_\infty}$ on $T^\infty M$ would be (a local) length minimizer of the \textbf{sup} and \textbf{sum} metrics, in the topological sense (see \cite{Olaf} p. 1485, def. 3.5 and section 5). More precisely at any stage we have a metric space $(T^iM,d_i)$, where $d_i$ is the metric induced by $g_i$. Now, if $\g=\vlim\g_i$ is a geodesic of $S^{c_\infty}$, then at every step we have a minimizer $\g_i$ of $d_i$. As a consequence $\g$ is a minimizer of the {sup} and {sum} metrics that is, the  length minimizers on $T^\infty M$ are geodesics of $S^{c_\infty}$
\end{Rem}
%
%
%

%
%
%
\section{Applications and examples}
In this section we state an examples  to reveal the benefits of working with Banach manifolds and our results.
\begin{Examp}
Let $(M,g)$ be an $n$-dimensional Riemannian manifold and $I=[0,1]$. An $H^1$  loop $c:I\to \R^n$ is an absolutely continuous curve for which $\dot{c}(t)$ exists almost everywhere and $\dot{c}$ is square integrable and $c(0)=c(1)$ (\cite{Kling} p. 159).   A loop $c:I\to M$ is called of class $H^1$ if, for any chart $(U,\p)$ of $M$ the mapping $\p\o c:I'\subseteq c^{-1}(U)\to \mathbb{R}^n$ is  $H^1$.    Then  $H^1(I,M)$, formed by the loops $c:I\to M$ of class $H^1$, admits a Hilbert manifold structure modeled on the Hilbert space $\mathbb{H}:=H^1(c^*TM)$ \cite{Flaschel, Kling}.

The space of vector fields along $c$ is isomorphic with the space of sections $\G(c^*TM)$ of the pullback $c^* TM$ and it is identified with the tangent space $T_cH^1(I,M)$. The scalar product on $T_cH^1(I,M)$ is given by $<u,v>_c = \int_0^{1} g(u(t), v(t))dt + \int_0^{1} g\big(\frac{D}{dt}u(t), \frac{D}{dt}v(t)\big)dt$ for any $u,v\in T_cH^1(I,M)$. (For a detailed study about the geometry of $H^1(I,M)$ we refer to \cite{Kling}.) However,  one may consider the $C^k$ maps and construct a Banach manifold structure  space using the sup-norm instead of using $L^2$ norms \cite{Eliasson}.

Let $f:M\to N$ be a differentiable map between finite dimensional Riemannian manifolds. Then there is a natural (pointwise) differentiable map $H^1(f) : H^1(I,M) \to H^1(I,N)$ where $H^1(f)(c)(t) = f (c(t))$, $t\in[0,1]$. Moreover $TH^1(I,M)=H^1(I,TM)$ that is $H^1$ commutes with the functor $T$ and for the smooth map $g:L\to M$, $H^1(f\o g)=H^1(f)\o H^1(g)$  \cite{Flaschel, Kling}.

For any (semi)spray $S$ on $M$ we claim that  $$H^1S:TH^1(I,M)\to T^2H^1(I,M)$$ is a (semi)spray on $H^1(I,M)$ and $(H^1S)^c=H^1(S^c)$.

To this end, we  first show that $H(\k_i)$ is the involution map of $T^iH^1(I,M)$, $i\in \N$. For the case $i=1$ the problem is trivial. Suppose that $i\geq 2$ and $\g:(-\eps,\eps)^2\to T^{i-2}H^1(I, M)=H^1(I, T^{i-2} M)$ be a smooth curve and $k_i^{H^1}:T^iH^1(I, M)\to T^iH^1(I, M)$ be the involution map that is
\begin{eqnarray*}
k_i^{H^1}\partial_{s_1}\partial_{s_2}\g(s_1,s_2)=\partial_{s_2}\partial_{s_1}\g(s_1,s_2).
\end{eqnarray*}
Then, for any $t\in[0,1]$ we have
\begin{eqnarray*}
\big(H^1\k_i\partial_{s_1}\partial_{s_2}\g(s_1,s_2)\big)(t)&=&\k_i\partial_{s_1}\partial_{s_2}\g(s_1,s_2,t)\\
&=&\partial_{s_2}\partial_{s_1}\g(s_1,s_2,t)\\
&=&\partial_{s_2}\partial_{s_1}\g(s_1,s_2)(t)
\end{eqnarray*}
that is $H^1\k_i=\k_i^{H^1}$.

As a consequence $\k_2^{H^1} \o {H^1}S=({H^1}\k_2)\o ({H^1}S)={H^1}(\k_2\o S) ={H^1}S$. Moreover,
\begin{eqnarray*}
(H^1S)^c &=& D\k_2^{H^1} \o \k_3^{H^1}\o D H^1S \o \k_2^{H^1} \\
&=& D{H^1}\k_2 \o {H^1}\k_3 \o D {H^1}S \o {H^1}\k_2 \\
&=& {H^1}D\k_2 \o {H^1}\k_3 \o {H^1}DS \o {H^1}\k_2 \\
&=& {H^1}(D\k_2 \o \k_3 \o DS \o \k_2) \\
&=& {H^1}(S^c).
\end{eqnarray*}
Consequently  $({H^1}S)^{c_i}={H^1}(S^{c_i})$ for any $i\in N$. On the other hand, $T^iH^1(I$, $M)=H^1(I,T^iM)$. These last two equalities motivate us to define
$$T^\infty H^1(I,M)=\vlim T^iH^1(I,M)$$
and $H^1S^{c_\infty}:=(H^1S)^{c_\infty}$. However, we are not sure about the existence a reasonable manifold structure on $H^1(I,T^\infty M)$ since the manifold structure in the classical case (i.e. $H^1(I,N)$) deeply depends on the notion of the exponential mapping (and the Riemannian metric) on the target manifold which is not guaranteed in our case \cite{Eliasson, Hamilton}. In fact, on Fr\'{e}chet modeled manifolds, an inverse function theorem is not always available and only under certain conditions (like tameness \cite{Hamilton}) a version of the inverse function theorem (and consequently an exponential mapping) is available. Since for any $i\in\N$, $H^1(I,T^iM)=T^iH^1(I,TM)$ we can also define
\begin{equation*}
H^1(I,T^\infty M):=T^\infty H^1(I,TM).
\end{equation*}

Using the terminologies of Lang \cite{Lang} or Klingenberg \cite{Kling}, one can show that if $S$ is a spray then $H^1S$ is also an spray. Now, proposition \ref{proposition compltete lift of sprays} implies that $H^1S^c=(H^1S)^c$ is a spray on $TH^1(I,M)=H^1(I,TM)$ and $H^1S^{c_i}=(H^1S)^{c_i}$, for any $i\in\N$.

Finally, using remark 3.1 and the techniques of proposition 4.1 of \cite{ali-iejgeo}, enable us to lift a connection form the finite dimensional manifold $M$ to the  manifold $T^i H^1(I,M)$, $i\in\N\cup\{\infty\}$.
\end{Examp}
%
%
%
%

%
\bigskip

\end{document}